\newtheorem{theorem}{Theorem}[section]     
\newtheorem{lemma}[theorem]{Lemma}
\newtheorem{proposition}[theorem]{Proposition}
\newtheorem{corollary}[theorem]{Corollary}
\newcommand{\Z}{\mathbb Z}
\newcommand{\Styl}{Styl}
\newcommand{\K}{\mathbb K}
\newcommand{\C}{\mathbb C}
\newcommand{\Plax}{Plax}
\def\J{\mathscr{J}}
\title{Quivers of stylic algebras}
\author{Antoine Abram}
\author{Christophe Reutenauer}
\author{Franco Saliola}
\address{Universit\'e du Qu\'ebec \`a Montr\'eal}
\thanks{The authors acknowledge the support of the Natural Sciences and
    Engineering Research Council of Canada (NSERC). This research was
    facilitated by computer exploration using the open-source mathematical
    software system \textsc{SageMath}~\cite{SageMath} and its algebraic
    combinatorics features developed by the \textsc{Sage-Combinat} community
\cite{sage-combinat}.}
\date{\today}
\begin{document}

\maketitle

\begin{abstract}
    We construct a complete system of primitive orthogonal idempotents and
    give an explicit quiver presentation of the monoid algebra of the stylic
    monoid introduced by Abram and Reutenauer.
\end{abstract}

\maketitle

\tableofcontents

\section{Introduction}

We study the monoid algebra of the stylic monoid $\Styl(A)$ introduced by the
first two authors in \cite{AR}. We begin by recalling its definition.

Let $A$ be a totally ordered finite alphabet
and $A^*$ the free monoid that it generates.
The \emph{Robinson--Schensted--Knuth (RSK) correspondence} associates
with each word $w \in A^*$ a semistandard tableau $P(w)$ with entries in $A$
called its \emph{$P$-symbol}.
If $w$ is a decreasing word, then its $P$-symbol $P(w)$ is a column, which
allows us to identify the set of decreasing words on $A$ with the set
$\Gamma(A)$ of column-shaped tableaux with entries in $A$.
This induces a left action of $A^*$ on $\Gamma(A)$:
for a word $x \in A^*$ and a column $\gamma \in \Gamma(A)$,
take $x \cdot \gamma$ to be the first column of the tableau $P(x w)$, where
$w$ is the decreasing word corresponding to the column $\gamma$.
(This action can be defined using the Schensted column insertion procedure;
see \S\ref{Schensted-column-insertion}.)
The finite monoid of endofunctions of $\Gamma(A)$ obtained by this action is
the \emph{stylic monoid $\Styl(A)$}.

It turns out that $\Styl(A)$ is canonically isomorphic to a quotient of the
celebrated plactic monoid. Recall that the plactic monoid has appeared in many
contexts in algebraic combinatorics and was used to give the first rigorous
proof of the \emph{Littlewood--Richardson rule} \cite{Schutzenberger1977,
Lascoux-Schutzenberger-1981, Lothaire2002}.
The monoid algebra $\K\Styl(A)$, where $\K$ is any field, is the
first example of a finite dimensional representation of the plactic monoid that
does not pass through the abelianisation (to our knowledge).
This article is a first step towards understanding the structure of this
representation.

Stylic monoids are examples of $\J$-trivial monoids \cite{AR}, which are
a ubiquitous class of monoids that arise naturally in algebraic combinatorics.
Other examples include the $0$-Hecke monoids associated with finite Coxeter
groups, and the monoids of regressive order-preserving functions on a poset;
see \cite{DHST} for many more examples.
It follows that the monoid algebra $\K\Styl(A)$ admits
a \emph{quiver presentation}: that is, $\K\Styl(A)$ is isomorphic to a quotient
of the path algebra $\K Q(A)$ of a canonical quiver $Q(A)$.

Obtaining a quiver presentation is an essential step towards applying the tools
and techniques from the modern representation theory of finite dimensional
algebras~\cite{ASSVol1}.
One of our main
results is an explicit presentation of $\K\Styl(A)$ as a \emph{quiver
with relations}.
Our approach is constructive in the sense that we explicitly identify
a complete system of primitive orthogonal idempotents in $\K\Styl(A)$
(Theorem~\ref{system}) that we use to define a quiver $Q(A)$ together with
a surjective map $\varphi: \K Q(A) \xrightarrow{} \K\Styl(A)$
(Corollary~\ref{quiver-map-is-surjective}) whose kernel is an admissible
ideal (Theorem~\ref{span}). General theory then implies that $Q(A)$ is
the quiver of $\K\Styl(A)$ (Theorem~\ref{quiver}).

We remark that the representation theory of finite monoids naturally occurring
in algebraic combinatorics, especially in connection with Markov chains,
has been investigated by many authors:
\cite{BD1998,
BHR1999,
BBD1999,
Brown2000,
Brown2004,
Saliola2007,
Saliola2009,
BBBS2011,
GanyushkinMazorchuk2011,
MargolisSteinberg2011,
MargolisSteinberg2012,
MazorchukSteinberg2012,
HST2013,
GrensingMazorchuk2014,
ASST2015,
MargolisSteinbergSaliola2015,
MargolisSteinberg2018,
Stein2020,
MargolisSteinbergSaliola2021};
see especially Steinberg's recent book and the references therein \cite{SteinbergMonoidRepTheory}.
Those most closely related to the present work are \cite{DHST},
\cite[Chapter~17]{SteinbergMonoidRepTheory} and \cite{MargolisSteinberg2018},
which describe the quiver of the algebra of a $\J$-trivial monoid.
While guided by this work, our approach is complementary and completely
self-contained as their techniques do not involve constructing primitive
orthogonal idempotents or a quiver presentation. In fact, in
\cite{MargolisSteinberg2018} one reads \emph{``It is notoriously difficult to write
down explicit primitive idempotents for monoids algebras (c.f. \cite{BBBS2011,
Denton2011}) and often they have complicated expressions in terms of the monoid
basis, making it virtually impossible to determine even the dimension of the
corresponding projective indecomposable module let alone construct a matrix
representation out of it.''}

\section{Stylic monoid and algebra}

We consider a totally ordered finite set $A$, whose elements are called {\it letters}, and the free monoid $A^*$ that it generates. Its elements are called {\it words}. The {\it alphabet} of a word~$x$ is the set of letters $Alph(x)$ appearing in $x$.

\subsection{Tableaux}
We call a {\it tableau} what is usually called a {\it semistandard Young tableau}: a finite lower order ideal
of the poset $\mathbb N^2$, ordered naturally (that is, a finite subset $E\subset \mathbb N^2$ such that $x\leq y$ and $y\in E$ implies $x\in E$),
together with a weakly increasing mapping into $A$, such that the restriction of this mapping to
each subset with given $x$-coordinate is injective. A tableau is usually represented as in Figure \ref{tab}. The
conditions may be expressed by saying that the letters in $A$ are weakly increasing from left to right in each row, and
strictly
increasing from the bottom to top in each column.

\begin{figure}
\begin{ytableau}
 d\\
b&  b\\
a&a&c \end{ytableau}
\caption{A tableau}\label{tab}
\end{figure}

A {\it column} is a tableau with only one column. The set of columns on $A$ is denoted by $\Gamma(A)$. A column is identified naturally with a subset of $A$, and also with the word in $A^*$ that is the decreasing product of its elements.

\subsection{Schensted's column insertion procedure}
\label{Schensted-column-insertion}
Let us recall the Schensted {\it column insertion} algorithm. Let $\gamma$ be a column, viewed here as a subset
of $A$, and let $x\in A$. There are two cases: if $\forall y\in\gamma, x>y$, then define $\gamma'=
\gamma\cup x$. Otherwise, let $y$ be the smallest element in $\gamma$ with $y\geq x$; then define $
\gamma'=(\gamma\setminus y)\cup x$. Then $\gamma'$ is the column obtained by {\it column insertion of $x$ into} $
\gamma$, and in the second case, $y$ is said to be {\it bumped}.

We define a left action of $A^*$ on $\Gamma(A)$, denoted $u\cdot \gamma$, for each $u\in A^*$ and each
column $\gamma$. Since $A^*$ is the free monoid on $A$, it is enough to define the action for each letter $a\in A$.
Define
$$a\cdot \gamma=\gamma'$$
if $\gamma'$ is obtained from $\gamma$ by  column insertion of $a$ into $\gamma$.

For further use, we note that if $\gamma$ is a column, then we have
$$
\gamma\cdot \emptyset=\gamma,
$$
where on the left-hand side, $\gamma$ is viewed as a decreasing word.

\subsection{Stylic monoid}
\label{Stylic-monoid}
We denote by $\Styl(A)$ the monoid of endofunctions of the set $\Gamma(A)$ of columns obtained by the action defined above. Thus, a typical element of $\Styl(A)$ is a function
\begin{eqnarray*}
    \mu_w : \Gamma(A) & \to & \Gamma(A) \\
    \gamma & \mapsto  & w\cdot \gamma
\end{eqnarray*}
for some word $w\in A^*$. Since $\Gamma(A)$ is finite, $\Styl(A)$ is finite. Let $\mu:A^*\to \Styl(A)$ be the canonical monoid homomorphism defined by $\mu(w) = \mu_w$.

We denote by $\equiv_{styl}$ the
corresponding monoid congruence of $A^*$, called the {\it stylic congruence}:
\begin{equation*}
    u\equiv_{styl} v \quad \Longleftrightarrow \quad \mu(u)=\mu(v)
    \quad \Longleftrightarrow \quad u\cdot \gamma=v\cdot \gamma \text{~for all columns $\gamma$.}
\end{equation*}
The monoid $\Styl(A)$ acts naturally on the set of columns, and we
take the same notation: $m\cdot \gamma=w\cdot \gamma$ if $m=\mu(w)$.

\subsection{Relationship with the plactic monoid}
\label{Relationship-with-the-plactic-monoid}

The {\it Schensted $P$-symbol}  is a mapping that associates with each word $w$ on $A$ a tableau $P(w)$, see \cite{Sagan, Lothaire2002}.
The relation~$\equiv_{plax}$ on $A^*$, defined by
$$
u\equiv_{plax} v \quad \Longleftrightarrow \quad P(u)=P(v),
$$
is a congruence of the monoid $A^*$, called the {\it plactic congruence}. The quotient monoid~$A^*/{\equiv_{plax}}$ is called the {\it plactic monoid}.

The {\it column-reading word} of a tableau is the word obtained by reading the columns from left to right, each column being read as a decreasing word. For example, the column reading word of the tableau from Figure~\ref{tab} is the word $dbabac$. If $T$ is a tableau, with column-reading word $w$, then
\begin{equation}\label{Pw=T}
P(w)=T
\end{equation}
by a theorem of Schensted.

The {\it plactic relations}, due to Knuth, are the following relations:
\begin{equation}\label{plactic-abc}
bac\equiv_{plax} bca, \qquad acb\equiv_{plax}cab
\end{equation}
for any choice of letters $a<b<c$ in $A$, and
\begin{equation}\label{plactic-ab}
bab\equiv_{plax} bba, \qquad aba\equiv_{plax}baa
\end{equation}
for any choice of letters $a<b$ in $A$. The plactic congruence is generated by these relations.

By \cite[Theorem~8.1]{AR}, the stylic congruence is generated by the plactic relations~\eqref{plactic-abc} and~\eqref{plactic-ab} together with the {\it idempotent relations} $a^2=a$ for any letter $a$ in $A$.
It then follows that if $B\subset A$, then there is a natural embedding $\Styl(B)\to \Styl(A)$ \cite[Corollary~8.4]{AR}.

\subsection{$N$-tableaux}
\label{N-tableaux}
According to \cite[Theorem~7.1]{AR}, there is a mapping from $A^*$ into the set of tableaux
that induces a bijection from the stylic monoid $\Styl(A)$ onto the set of {\it $N$-tableaux} on $A$.
The image of $x \in A^*$ is denoted $N(x)$ and is called the {\it $N$-tableau} of $x$. We also denote by $N$ the induced bijection from the stylic monoid onto the set of $N$-tableaux.
The precise definition of $N$ is not needed here, rather we will make use of the following properties of $N$.

\begin{proposition}\label{properties}
    \begin{itemize}[leftmargin=10pt]
        \item
            The first column of $N(x)$ is equal to that of the $P$-symbol $P(x)$, and it is $x\cdot\emptyset$, where $\emptyset$ denotes the empty column.

        \item
            The set of $a\in A$ fixing $w \in \Styl(A)$
            on the left ($aw=w$) is equal
            to the first column of $N(w)$.

        \item
            If $x$ is in $\Styl(A)$, then the column-reading word $w$ of $N(x)$ satisfies
            \begin{equation}\label{xwN}
                x=\mu(w).
            \end{equation}
    \end{itemize}
\end{proposition}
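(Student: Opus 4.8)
The plan is to treat the third assertion as the foundation and to \emph{derive} the first two from it, together with Schensted's identity~\eqref{Pw=T} and the definition of the column action; accordingly I would prove the three bullets in the order third, first, second. The third assertion is precisely the reconstruction half of the bijection $N\colon \Styl(A)\to\{N\text{-tableaux}\}$ of \cite[Theorem~7.1]{AR}: its inverse sends an $N$-tableau $T$ to the stylic element $\mu(w)$, where $w$ is the column-reading word of $T$. I would import this statement from \cite{AR}, since (as the sentence preceding the proposition stresses) it rests on the definition of $N$, which we do not reproduce. Writing $w_\gamma$ for the decreasing word of a column $\gamma$, I also record the elementary identity that $x\cdot\emptyset$ equals the first column of $P(x)$: by definition the action gives $x\cdot\emptyset$ as the first column of $P(xw_\emptyset)$, and $w_\emptyset$ is the empty word.

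For the first assertion, let $m\in\Styl(A)$, let $T=N(m)$, and let $w$ be the column-reading word of $T$. By the third assertion $\mu(w)=m$, and by~\eqref{Pw=T} we have $P(w)=T$, since $w$ is the column-reading word of the tableau $T$. Hence the first column of $N(m)$ equals the first column of $P(w)$, which by the identity above is $w\cdot\emptyset=\mu(w)\cdot\emptyset=m\cdot\emptyset$. Specializing to $m=\mu(x)$ and using $m\cdot\emptyset=x\cdot\emptyset=$ the first column of $P(x)$ recovers the first assertion verbatim.

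For the second assertion I would first reduce the fixing condition to a statement about columns. From the column-insertion rule one checks directly that, for a single column $c$ and a letter $a$, one has $a\cdot c=c$ if and only if $a\in c$: if $a$ exceeds every element of $c$ the insertion strictly enlarges $c$, and otherwise the inserted $a$ bumps the least $y\ge a$ of $c$, so the result equals $c$ exactly when $y=a$, that is when $a\in c$. Since $aw=w$ in $\Styl(A)$ means $a\cdot(w\cdot\gamma)=w\cdot\gamma$ for every column $\gamma$, this shows $aw=w$ holds if and only if $a\in w\cdot\gamma$ for all $\gamma$. Taking $\gamma=\emptyset$ and invoking the first assertion yields the inclusion $\{a : aw=w\}\subseteq w\cdot\emptyset=$ the first column of $N(w)$.

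The reverse inclusion is the crux, and reduces to a monotonicity lemma: the first column of $P(v)$, viewed as a set, is contained in the first column of $P(vu)$ for every word $u$. Granting this with $v=w$ and $u=w_\gamma$ gives $w\cdot\emptyset\subseteq w\cdot\gamma$ for all $\gamma$, so every $a\in w\cdot\emptyset$ lies in every $w\cdot\gamma$ and hence fixes $w$, completing the second assertion. I would prove the lemma by inserting the letters of $u$ one at a time and tracking the RSK bumping cascade: a letter entering the first row either appends, leaving the first column unchanged, or becomes the new least entry of its row and displaces the previous leftmost entry upward, and this displaced entry is again the least element exceeding it in the next row, so it too lands in the leftmost position; thus each insertion only adjoins an element to the first column and never deletes one. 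The two points I expect to require the most care are (i) the dependence of the third assertion on the definition of $N$ from \cite{AR}, which I would quote rather than reprove, and (ii) the cascade bookkeeping in the monotonicity lemma, where one must verify that the row-insertion cascade never pushes a first-column entry out to the right.
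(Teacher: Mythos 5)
Your argument is correct, but it takes a genuinely different route from the paper, whose proof of all three bullets is essentially a sequence of citations to \cite{AR}: the first bullet is \cite[Lemma~7.2(i)]{AR}, the second follows from \cite[Theorem~11.4]{AR} and the definition of left $N$-insertion, and the third from the row-reading analogue \cite[Equation~5]{AR} together with the plactic equivalence of the row- and column-reading words of a tableau. You instead take the third bullet as the only imported fact and derive the other two: the first from $P(w)=T$ for the column-reading word $w$ of $T=N(x)$ plus the observation that $x\cdot\emptyset$ is the first column of $P(x)$, and the second from the characterization $a\cdot c=c\iff a\in c$ together with a monotonicity lemma asserting that the first column of $P(v)$ is contained (as a set) in that of $P(vu)$. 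That lemma is true and your bumping-cascade argument is the right one --- once a displaced entry enters the first column it bumps the first entry of the next row (which is strictly larger by column-strictness) and so propagates up the first column without ever ejecting anything --- though your case analysis is slightly misstated: a letter entering a row may bump a non-leftmost entry, leaving that row's first column untouched while the cascade continues, so the invariant to carry is ``the first column never loses an element at any row,'' not that the inserted letter always lands leftmost. Two further cautions: (i) the third bullet as stated concerns the \emph{column}-reading word, whereas \cite{AR} records the identity for the \emph{row}-reading word, so even your foundational citation silently needs the plactic equivalence of the two reading words that the paper's proof invokes explicitly; (ii) your derivation buys self-containedness for the first two bullets at the price of proving an extra RSK lemma that the paper avoids entirely by leaning on \cite{AR}. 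Both are reasonable trade-offs, and nothing in your chain of reasoning fails.
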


\begin{proof}
    The first statement is \cite[Lemma 7.2 (i)]{AR}.
    The second statement follows from \cite[Theorem~11.4]{AR} and definition of the left $N$-insertion.
    The third statement follows from the analogous statement for row-reading words, which is \cite[Equation~5]{AR}, and the fact that column-reading and row-reading words of the same tableau are plactic-, hence stylic-, equivalent.
\end{proof}

\subsection{The anti-automorphism $\theta$}
\label{theta}

Recall, from \cite[Section~9]{AR}, the involutive anti-automorphism $\theta$ of the monoid $A^*$: when restricted to $A$, it reverses the order of $A$. It extends to an endofunction of $\Gamma(A)$, if one identifies as we do columns on $A$ and subsets of $A$. Since $\theta$ preserves the plactic relations, and the idempotent relations, it induces an anti-automorphism of the monoids $A^*$, $\Plax(A)$ and $\Styl(A)$.

\subsection{Stylic algebra}
\label{Stylic-algebra}

We denote by $\Z\Styl(A)$ the $\Z$-algebra of the stylic monoid, and we call it the {\it stylic algebra over} $\Z$. We shall consider also the stylic algebra over a field $\K$, which we denote by $\K\Styl(A)$.

\begin{lemma}\label{axa} Let $x\in\Z\Styl(A)$ and let $a$ be a letter such that each letter
appearing in $x$ is larger or equal to $a$.

(i) $axa=xa$;

(ii) $(1-a)xa=0$

(iii) $(1-a)x(1-a)=(1-a)x$.
\end{lemma}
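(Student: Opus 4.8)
The plan is to prove (i) first and to obtain (ii) and (iii) as immediate algebraic consequences. Since all three identities are $\Z$-linear in $x$, I would reduce at once to the case $x=\mu(w)$ for a single word $w\in A^*$, all of whose letters are $\geq a$. This reduction is legitimate because the set of letters occurring in a word is preserved by both the plactic relations~\eqref{plactic-abc}--\eqref{plactic-ab} and the idempotent relations, hence descends to a well-defined invariant of each monoid element; the hypothesis then says exactly that every monomial appearing in $x$ has all its letters $\geq a$. In this form, the identity $axa=xa$ becomes the assertion that $awa\equiv_{styl}wa$, i.e.\ that the letter $a$ fixes the stylic element $\mu(wa)$ on the left.

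To establish this I would invoke Proposition~\ref{properties}. Its second bullet states that $a$ fixes $\mu(wa)$ on the left if and only if $a$ lies in the first column of $N(\mu(wa))$, and its first bullet identifies that first column with the first column of $P(wa)$, namely $(wa)\cdot\emptyset$. Thus everything reduces to the purely combinatorial claim that $a\in(wa)\cdot\emptyset$.

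This combinatorial claim is the heart of the argument and the step I expect to require the most care. Writing $(wa)\cdot\emptyset=w\cdot(a\cdot\emptyset)$, one first column-inserts $a$ into the empty column, producing the one-element column $\{a\}$, and then inserts the letters of $w$ in turn. I would argue by induction on the number of insertions that $a$ remains the bottom (smallest) entry of the current column: inserting a letter $b\geq a$ into a column whose minimum is $a$ never bumps $a$, since if $b=a$ the insertion replaces $a$ by $a$ and leaves the column unchanged, while if $b>a$ the bumped entry (the smallest entry that is $\geq b$) is strictly larger than $a$, so $a$ survives and remains the minimum. Hence $a$ persists at the bottom of the first column throughout, giving $a\in(wa)\cdot\emptyset$ and therefore (i).

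Finally, (ii) and (iii) follow formally with no further difficulty beyond bookkeeping. Expanding and using (i) gives $(1-a)xa=xa-axa=0$, which is (ii); expanding and using (ii) gives $(1-a)x(1-a)=(1-a)x-(1-a)xa=(1-a)x$, which is (iii).
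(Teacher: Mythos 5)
Your proof is correct, but it takes a different route for part (i) than the paper does: the paper simply cites Lemma~9.4 of \cite{AR} for the identity $axa=xa$ and treats (ii) and (iii) as evident computations, whereas you derive (i) from scratch. Your reduction by linearity to a single monoid element $\mu(w)$ is sound (the alphabet is indeed an invariant of the stylic congruence, since the plactic and idempotent relations preserve the set of letters), and your chain of reductions --- via the second and first bullets of Proposition~\ref{properties} --- correctly turns $awa\equiv_{styl}wa$ into the combinatorial claim $a\in(wa)\cdot\emptyset$, which your insertion argument establishes: once $a$ is the minimum of the current column, inserting any $b\ge a$ either appends $b$ at the top, bumps an entry $\ge b>a$, or (when $b=a$) replaces $a$ by itself, so $a$ persists as the minimum throughout. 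What your approach buys is self-containedness relative to the combinatorial facts already recorded in Proposition~\ref{properties}, at the cost of length; what the paper's citation buys is brevity, at the cost of sending the reader to \cite{AR}. Your handling of (ii) and (iii) coincides with the paper's. One cosmetic remark: in your induction step you should also note the (trivial) case where $b$ exceeds every entry of the column, so that nothing is bumped and $\gamma'=\gamma\cup b$; this clearly preserves $a$ as the minimum, so nothing is lost.
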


\begin{proof} (i) follows from Lemma 9.4 in \cite{AR}.
Next, (ii) and (iii) follow by an evident computation.
\end{proof}

The next lemma extends the plactic relations in \eqref{plactic-ab}.

\begin{lemma}\label{superplax} Let $p,q\geq 1$. Consider letters in $A$ satisfying $x_1<\cdots<x_p<y<z_1<\cdots<z_q$, then
$$(x_1\cdots x_p)(z_1\cdots z_q)y\equiv_{styl}(z_1\cdots z_q)(x_1\cdots x_p)y,$$ and
$$
y(x_1\cdots x_p)(z_1\cdots z_q)\equiv_{styl}y(z_1\cdots z_q)(x_1\cdots x_p).
$$
\end{lemma}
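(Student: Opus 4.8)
The plan is to establish the first identity and then obtain the second for free from the anti-automorphism $\theta$ of \S\ref{theta}. Since $\theta$ reverses the order of $A$ as well as the order of letters within a word, it sends the chain $x_1<\cdots<x_p<y<z_1<\cdots<z_q$ to $\theta(z_q)<\cdots<\theta(z_1)<\theta(y)<\theta(x_p)<\cdots<\theta(x_1)$ and carries each increasing factor $x_1\cdots x_p$ and $z_1\cdots z_q$ to an increasing word again. A direct substitution then shows that $\theta$ transforms the first identity $(x_1\cdots x_p)(z_1\cdots z_q)\,y\equiv_{styl}(z_1\cdots z_q)(x_1\cdots x_p)\,y$ into an instance of the second identity (with the roles of $p$ and $q$ exchanged). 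Because $\theta$ induces an anti-automorphism of $\Styl(A)$ it preserves $\equiv_{styl}$, so the second identity follows once the first is proved for all $p,q\ge 1$.

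I would prove the first identity by induction on $q$, keeping in mind one genuinely delicate point: these block swaps are \emph{not} plactic (for instance $P(x_1z_1z_2y)\neq P(z_1z_2x_1y)$), so the argument is forced to use the idempotent relations $a^2=a$ alongside the plactic relations \eqref{plactic-abc} and \eqref{plactic-ab}. The base case $q=1$ is, however, purely plactic: to prove $(x_1\cdots x_p)\,z_1\,y\equiv_{styl} z_1\,(x_1\cdots x_p)\,y$ I bubble $z_1$ to the front one step at a time using only $acb\equiv cab$ from \eqref{plactic-abc}. The first move rewrites the factor $x_pz_1y$ as $z_1x_py$ (valid since $x_p<y<z_1$), and each later move rewrites a factor $x_iz_1x_{i+1}$ as $z_1x_ix_{i+1}$ (valid since $x_i<x_{i+1}<z_1$); the successive letters $x_{i+1}$, and $y$ at the first step, are exactly the intermediate witnesses the relation requires.

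For the inductive step $q\ge 2$ I would introduce a second copy of $y$ as a mobile witness and argue along the chain
\begin{align*}
(x_1\cdots x_p)(z_1\cdots z_q)\,y
  &\equiv_{styl} (x_1\cdots x_p)(z_1\cdots z_q)\,y\,y
  && \text{(idempotent $y=y^2$)} \\
  &\equiv_{styl} (x_1\cdots x_p)(z_1\cdots z_{q-1})\,y\,z_q\,y
  && \text{(from \eqref{plactic-abc})} \\
  &\equiv_{styl} (z_1\cdots z_{q-1})(x_1\cdots x_p)\,y\,z_q\,y
  && \text{(induction hypothesis)} \\
  &\equiv_{styl} (z_1\cdots z_{q-1})(x_1\cdots x_p)\,z_q\,y
  && \text{(from \eqref{plactic-ab}, then $y^2=y$)} \\
  &\equiv_{styl} (z_1\cdots z_{q-1})\,z_q\,(x_1\cdots x_p)\,y,
  && \text{(base case)}
\end{align*}
which is the desired identity for $q$. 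The second step moves one copy of $y$ left across $z_q$ via $z_{q-1}z_qy\equiv z_{q-1}yz_q$; the fourth step collapses the factor $yz_qy$ to $z_qy$ by applying $aba\equiv baa$ and then the idempotent relation; and the final step is the already-proved case $q=1$. The step I expect to be the crux is precisely the introduction and subsequent elimination of the duplicate $y$: it is the only place the idempotent relations enter, and recognising that a duplicated $y$ can be shuffled by genuine plactic moves into the position of an intermediate witness is what makes this non-plactic identity provable. The only routine checks remaining are that the rewritten factors are genuinely adjacent and that the letter inequalities needed by each relation hold, both of which are immediate from $x_1<\cdots<x_p<y<z_1<\cdots<z_q$.
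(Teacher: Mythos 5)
Your proposal is correct and follows essentially the same route as the paper's proof: induction on $q$ with the purely plactic base case $q=1$ (your letter-by-letter bubbling of $z_1$ is just the paper's induction on $p$ unrolled), the same idempotent trick of duplicating $y$ and shuffling the extra copy next to $z_q$ in the inductive step, and the same use of $\theta$ to deduce the second identity. The only cosmetic difference is that you insert the duplicate $y$ before $z_q$ via $z_{q-1}z_qy\equiv z_{q-1}yz_q$ from \eqref{plactic-abc}, whereas the paper uses $z_qyy\equiv yz_qy$ from \eqref{plactic-ab}; both are valid.
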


\begin{proof} We prove the first identity by double induction. Suppose first that $q=1$. If $p=1$, we are reduced to the plactic relation $x_1z_1y\equiv_{styl}z_1x_1y.$
Suppose that $p\geq 2$. Then, by the plactic relations, we have
$$(x_1\cdots x_{p-1})x_pz_1y  \equiv_{styl} (x_1\cdots x_{p-1})z_1x_py \equiv_{styl} z_1(x_1\cdots x_{p-1})x_py$$
by induction on $p$ applied to the product $(x_1\cdots x_{p-1})z_1x_p$.

Suppose now that $q\geq 2$. Then, using the congruences $z_qy\equiv_{styl}z_qyy\equiv_{styl}yz_qy$ twice, we have
\begin{align*}
    x_1\cdots x_pz_1\cdots z_qy
    & \equiv_{styl} x_1\cdots x_pz_1\cdots z_{q-1}yz_qy \\
    & \equiv_{styl} z_1\cdots z_{q-1}x_1\cdots x_pyz_qy & \text{(by induction on $q$)} \\
    & \equiv_{styl} z_1\cdots z_{q-1}x_1\cdots x_pz_qy \\
    & \equiv_{styl} z_1\cdots z_{q-1}z_qx_1\cdots x_py & \text{(case $q=1$)}.
\end{align*}
By applying the anti-automorphism $\theta$ to the first identity we obtain
$$
\theta(y)\theta(z_q)\cdots \theta(z_1)\theta(x_p)\cdots\theta(x_1)\equiv_{plax}\theta(y)
\theta(x_p)\cdots\theta(x_1)\theta(z_q)\cdots\theta(z_1).
$$
Note that
$$
\theta(z_q)<\cdots<\theta(z_1)<\theta(y)<\theta(x_p)<\cdots<\theta(x_1).
$$
Hence we obtain the second identity of the lemma by a change of variables, after exchanging $p$ and $q$.
\end{proof}

\section{Primitive idempotents of the stylic algebra}

In this section, we construct a complete system of primitive orthogonal idempotents in the stylic algebra $\Z\Styl(A)$.

Recall that $\Gamma (A)$ denotes the set of columns on the totally ordered finite alphabet~$A$.
Let $\gamma \in \Gamma(A)$ be a column and define
\begin{equation}\label{egamma}
 e_\gamma=\prod^{\nearrow}_{a\notin \gamma}(1-a)\prod^{\searrow}_{a\in\gamma} a\in \Z \Styl(A),
\end{equation}
where the arrows indicate that the first product is in increasing order of letters, and the second in decreasing order.

For future use, we note that the second product in \eqref{egamma} is the image of $\gamma$ (viewed as a word)  in $\Styl(A)$;
since decreasing words are idempotent in $\Styl(A)$ \cite[Theorem 12.1]{AR}, we have
\begin{equation}\label{egammagamma}
e_\gamma \gamma =e_\gamma.
\end{equation}

\begin{theorem}\label{system}
    The idempotents $e_\gamma$, one for each $\gamma\in \Gamma(A)$, form
    a complete system of primitive orthogonal idempotents of $\Z\Styl(A)$.
    Precisely, we have
    \begin{enumerate}
        \item
            $e_\gamma^2 = e_\gamma$ and $e_\gamma e_\delta = 0$ for all $\gamma, \delta \in \Gamma(A)$ with $\delta \neq \gamma$;

        \item
            $\sum_{\gamma \in \Gamma(A)} e_\gamma = 1$;

        \item
            for every $\gamma \in \Gamma(A)$, the idempotent $e_\gamma$
            cannot be written as $e_\gamma = x + y$ with $x$ and $y$ nonzero orthogonal idempotents
            in $\Z\Styl(A)$.
    \end{enumerate}
\end{theorem}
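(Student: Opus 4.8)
The plan is to induct on $|A|$ by deleting the smallest letter. Set $a=\min A$ and $B=A\setminus\{a\}$, and write $e^{B}_{\delta}$ for the idempotent \eqref{egamma} formed inside $\Z\Styl(B)$, viewed in $\Z\Styl(A)$ through the embedding $\Z\Styl(B)\hookrightarrow\Z\Styl(A)$ of \cite[Corollary~8.4]{AR}. Because $a$ is minimal, every letter occurring in any element of $\Z\Styl(A)$ is $\ge a$, so Lemma~\ref{axa} applies to \emph{every} element of $\Z\Styl(A)$. Reading \eqref{egamma} and using that $a$ comes first in increasing order and last in decreasing order gives the two recursions
\[
  e_{\delta}=(1-a)\,e^{B}_{\delta},\qquad e_{\delta\cup\{a\}}=e^{B}_{\delta}\,a\qquad(\delta\in\Gamma(B)),
\]
and these exhaust $\Gamma(A)=\Gamma(B)\sqcup\{\delta\cup\{a\}:\delta\in\Gamma(B)\}$. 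I abbreviate $g_{\delta}=e_{\delta}$ and $h_{\delta}=e_{\delta\cup\{a\}}$.

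For (1) and (2) I would substitute these recursions into Lemma~\ref{axa}, assuming the theorem for $B$. Writing $e=e^{B}_{\delta}$, $f=e^{B}_{\delta'}$ and using the inductive orthogonality $ef=[\delta=\delta']\,e$, the four products collapse at once: $g_{\delta}g_{\delta'}=(1-a)e(1-a)f=(1-a)ef=[\delta=\delta']g_{\delta}$ by Lemma~\ref{axa}(iii); $g_{\delta}h_{\delta'}=(1-a)efa=[\delta=\delta'](1-a)ea=0$ by (ii); $h_{\delta}g_{\delta'}=e\,a(1-a)\,f=0$ since $a(1-a)=0$; and $h_{\delta}h_{\delta'}=e(afa)f\text{-term}=e(fa)=ef\,a=[\delta=\delta']h_{\delta}$ by (i). Thus the $g$'s and $h$'s are idempotent, mutually orthogonal, and orthogonal among themselves, which is (1). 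For (2), linearity together with $\sum_{\delta\in\Gamma(B)}e^{B}_{\delta}=1$ yields $\sum_{\gamma\in\Gamma(A)}e_{\gamma}=(1-a)\sum_{\delta}e^{B}_{\delta}+\bigl(\sum_{\delta}e^{B}_{\delta}\bigr)a=(1-a)+a=1$. The base case $A=\emptyset$ is immediate, as $e_{\emptyset}=1$.

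For (3) I would first reduce to a field: any splitting $e_{\gamma}=x+y$ into nonzero orthogonal idempotents of $\Z\Styl(A)$ stays one in $\mathbb{Q}\Styl(A)$, so it is enough to prove $e_{\gamma}$ primitive in $\K\Styl(A)$, which is equivalent to the corner ring $e_{\gamma}\K\Styl(A)e_{\gamma}$ being local. The small cases — and the cancellations above, e.g. $g_{\delta}\,a\,g_{\delta}=(1-a)e\,a(1-a)\,e=0$ — point to the sharper statement $e_{\gamma}\,\K\Styl(A)\,e_{\gamma}=\K e_{\gamma}$, i.e. $e_{\gamma}m e_{\gamma}\in\K e_{\gamma}$ for all $m\in\Styl(A)$; this one-dimensionality makes the corner a field and primitivity is then automatic. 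I would attempt it by the same induction, using the retraction $\pi\colon\K\Styl(A)\to\K\Styl(B)$ that erases $a$ (well defined since erasing the minimal letter preserves both the plactic and the idempotent relations), for which $\pi(h_{\delta})=e^{B}_{\delta}$, $\pi(g_{\delta})=0$, together with the absorptions $a h_{\delta}=h_{\delta}a=h_{\delta}$ and $(1-a)h_{\delta}=h_{\delta}(1-a)=0$ coming from Lemma~\ref{axa}.

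The hard part is exactly controlling $e_{\gamma}m e_{\gamma}$ when $m$ genuinely involves $a$: Lemma~\ref{axa} clears the copies of $a$ adjacent to $e_{\gamma}$, but folding a general $m$ into $\Styl(B)$-form inside the sandwich is not formal, and one-dimensionality of the diagonal corners amounts to $\K\Styl(A)$ being basic with an \emph{acyclic} quiver — which I cannot borrow from Theorem~\ref{quiver}, as that rests on the present theorem. Should the induction stall, the clean finish is the general theory of $\J$-trivial monoids: all simple $\K\Styl(A)$-modules are one-dimensional and are indexed by the monoid's idempotents, so $\K\Styl(A)/\mathrm{rad}$ is split commutative of dimension $|E(\Styl(A))|$. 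The idempotents of $\Styl(A)$ are precisely the columns (the decreasing words are idempotent by \cite[Theorem~12.1]{AR}, and one checks conversely that every idempotent is a column), so $|E(\Styl(A))|=|\Gamma(A)|$. Each $e_{\gamma}\neq0$: in \eqref{egamma} the basis element $\gamma$ occurs only from the empty subset, with coefficient $1$, while every other term is $\pm(w\cdot\gamma)$ for a nonempty increasing word $w$ in $A\setminus\gamma$, and since the smallest letter of $w$ is column-inserted last it lies in $w\cdot\gamma$, forcing $w\cdot\gamma\neq\gamma$ and precluding cancellation. Hence the $\bar e_{\gamma}$ are $|\Gamma(A)|$ nonzero orthogonal idempotents summing to $1$ in $\K\Styl(A)/\mathrm{rad}\cong\K^{|\Gamma(A)|}$, so each is a minimal idempotent and each $e_{\gamma}$ is primitive, giving (3).
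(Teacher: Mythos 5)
Your proposal is correct, and parts of it diverge from the paper in instructive ways. Part (1) is essentially the paper's proof verbatim: the same induction on $|A|$ via the recursions $e_\delta=(1-a)e^{B}_\delta$ and $e_{\delta\cup\{a\}}=e^{B}_\delta a$, the same four cases, the same uses of Lemma~\ref{axa}. For part (2) you induct instead: $\sum_\gamma e_\gamma=(1-a)\sum_\delta e^{B}_\delta+\bigl(\sum_\delta e^{B}_\delta\bigr)a=(1-a)+a=1$. This is cleaner than the paper's argument, which expands \eqref{egamma} in the free associative algebra and cancels the coefficients of the multilinear words $uzv$ pairwise between $e_V$ and $e_{V\cup z}$; both are valid, and your recursion in fact also proves the identity at the level of noncommutative polynomials, which is what the paper's computation delivers. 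For part (3) you start down the paper's road ($e_\gamma\,\K\Styl(A)\,e_\gamma=\K e_\gamma$ by induction) but stall exactly where the paper does not: when $a\in\gamma$, the paper collapses all occurrences of $a$ in the sandwich by iterating Lemma~\ref{axa}~(i), writing $e'_{\gamma\setminus a}\,a\,w\,e'_{\gamma\setminus a}\,a=e'_{\gamma\setminus a}\,w'\,e'_{\gamma\setminus a}\,a$ with $w'$ the word $w$ stripped of its $a$'s, which places the middle factor in $\C\Styl(A\setminus a)$ and lets the induction close --- so the ``folding into $\Styl(B)$-form'' you doubted is in fact formal. Your fallback is a genuinely different and valid route: since $\Styl(A)$ is $\J$-trivial, $\K\Styl(A)/\mathrm{rad}\cong\K^{|E(\Styl(A))|}$ with all simples one-dimensional, the idempotents of $\Styl(A)$ are exactly the columns, and $|\Gamma(A)|$ nonzero orthogonal idempotents summing to $1$ in $\K^{|\Gamma(A)|}$ are each primitive; together with the standard fact that primitivity of $\bar e_\gamma$ modulo the radical implies primitivity of $e_\gamma$, this finishes (3). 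Your nonvanishing argument (the coefficient of $\mu(\gamma)$ in $e_\gamma$ is exactly $1$ because $\min(S)\in w_S\cdot\gamma$ for $S\neq\emptyset$) is sound and is essentially the leading-term/$\J$-order-minimality argument the paper defers to Proposition~\ref{basis}. What the paper's route buys is self-containedness --- it deliberately avoids importing the structure theory of $\J$-trivial monoid algebras --- and the stronger, directly usable identity $e_\gamma\,\C\Styl(A)\,e_\gamma=\C e_\gamma$; what your route buys is brevity, at the cost of needing the full converse in \cite[Theorem~12.1]{AR} (every idempotent of $\Styl(A)$ is a column, not merely that columns are idempotent), which you should cite rather than wave at with ``one checks.''
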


\begin{proof}
1. We show that the elements $e_\gamma$ are orthogonal idempotents, by induction on the cardinality of the alphabet $A$. We use the fact that $\Styl(B)$ embeds canonically in $\Styl(A)$ if $B\subset A$, and similarly for their monoid algebras.

Let $a$ be the smallest letter in $A$. Let $\gamma$ and $\delta$ be two columns on $A$. For $\gamma' \in \Gamma(A \setminus a)$, we denote by $e'_{\gamma'}$ the elements \eqref{egamma} relative to the alphabet $A\setminus a$. We distinguish four cases:

-- If $a\in \gamma\cap \delta$, then by \eqref{egamma},
    $e_\gamma = e'_{\gamma \setminus a} a$ and
    $e_\delta = e'_{\delta \setminus a} a$.
Note that $\gamma=\delta$ if and only if $\gamma \setminus a =\delta \setminus a$, and so, by induction
$e'_{\gamma \setminus a} e'_{\delta \setminus a} = e'_{\gamma \setminus a}$ if $\gamma=\delta$,
and
$e'_{\gamma \setminus a} e'_{\delta \setminus a} = 0$ if $\gamma\neq \delta$.
Thus we have
$e_\gamma e_\delta=e'_{\gamma \setminus a} a e'_{\delta \setminus a} a=e'_{\gamma \setminus a}  e'_{\delta \setminus a} a$ (by Lemma~\ref{axa} (i)), and this is equal to $e'_{\gamma \setminus a} a=e_\gamma$ if $\gamma=\delta$, and to $0$ if $\gamma\neq \delta$.

-- Suppose now that $a\notin \gamma \cup\delta$. Then $e_\gamma=(1-a)e'_{\gamma}$ and $e_\delta=(1-a)e'_{\delta}$. By Lemma~\ref{axa} (iii), we have $e_\gamma e_\delta=(1-a)e'_{\gamma}(1-a)e'_{\delta}=(1-a)e'_{\gamma}e'_{\delta}$. Thus, $e_\gamma e_\delta$ is $e_\gamma$ if $\gamma=\delta$, and it is $0$ if $\gamma\neq \delta$.

-- Suppose that $a\in \gamma, a\notin \delta$. Then $\gamma\neq \delta$ and $e_\gamma e_\delta= e'_{\gamma \setminus a}a(1-a) e'_{\delta}= e'_{\gamma \setminus a}(a-a^2) e'_{\delta}=0$ since $a$ is idempotent.

-- Suppose that $a\notin \gamma, a\in \delta$. Then $\gamma\neq \delta$ and $e_\gamma e_\delta=(1-a)e'_{\gamma}e'_{\delta \setminus a}a=0$ by Lemma~\ref{axa} (ii).

2. We show that the sum in $\Z\Styl(A)$ of all $e_\gamma$ is equal to 1. Actually we show that this equality holds in the algebra of
noncommutative polynomials. By inspection of~\eqref{egamma}, one sees that this sum is equal to a linear combination of all multilinear
(without repeated letter) words on $A$ of the form $w=xy$, where $x$ is strictly increasing, and $y$ is strictly decreasing. Let $w$ be such a
nonempty word; then $w$ has a unique factorization $w=uzv$, where $u$ is strictly increasing, $v$ is strictly decreasing and $z$ is the
largest letter in $w$. Denote by $U$ the alphabet of $u$, and by $V$ that of $v$. Then the coefficient of $w$ in $e_V$ is $(-1)^{|U|+1}$ and
in $e_{V\cup z}$ it is $(-1)^{|U|}$, while in all other $e_\gamma$ it is 0 (recall that we identify columns in $\Gamma(A)$ and subsets of $A$).
Thus the coefficient of $w$ in the sum is $0$, and therefore the sum is equal to 1.

3. We show that the idempotents are primitive.
First note that since $\Z\Styl(A) \subset \C\Styl(A)$,
it suffices to prove it in $\C\Styl(A)$.
Next, we make use of the following characteristisation: an
idempotent $e$ of a finite dimensional $\C$-algebra $X$ is primitive
if and only if $0$ and $e$ are distinct and are the only idempotents in $e X e$
(see, for instance,
\cite[Section~I.4]{AOR},
\cite[Corollary~4.7]{ASSVol1},
or
\cite[Proposition~A.22]{SteinbergMonoidRepTheory}).
Thus it is enough to prove that
$e_\gamma \C \Styl(A) e_\gamma = \C e_\gamma$,
which we do by induction on the cardinality of $A$.
Let $a = \min(A)$ and $w\in A^*$.

-- Suppose $a \in \gamma$. Then
$e_\gamma w e_\gamma
= e'_{\gamma \setminus a} a w e'_{\gamma \setminus a} a
= e'_{\gamma \setminus a} w' e'_{\gamma \setminus a} a$,
by repeated application of Lemma~\ref{axa} (i), where $w'$ is obtained from $w$ by removing all occurrences of $a$.
Hence, $e'_{\gamma \setminus a} w' e'_{\gamma \setminus a} \in \C\Styl(A \backslash a)$,
so by induction there exists $z \in \C$ such that
$e_\gamma we_\gamma=$
$(e'_{\gamma \setminus a} w' e'_{\gamma \setminus a}) a
= (z e'_{\gamma \setminus a}) a = z e_{\gamma}$.

-- Suppose $a \notin \gamma$. Then
$e_\gamma w e_\gamma
= (1-a) e'_{\gamma \setminus a} w (1-a) e'_{\gamma \setminus a}$.
This is equal to $(1-a) e'_{\gamma \setminus a} w e'_{\gamma \setminus a}$
by Lemma~\ref{axa} (iii), and by induction there exists $z \in \C$ such that
$(1-a) (e'_{\gamma \setminus a} w e'_{\gamma \setminus a})
= (1-a) (z e'_{\gamma \setminus a}) = z e_{\gamma}$.

To conclude, it is enough to show that the $e_\gamma$ are nonzero. For this, it suffices to note that each $e_\gamma$ contains a unique element that is minimal with respect to the $\J$-order on the monoid; we delay the details to the proof of Proposition~\ref{basis} (which will be proved independently), in which we construct a basis of the monoid algebra of $\Styl(A)$ that includes these idempotents.
\end{proof}

\section{The quiver of the stylic algebra}

In this section, we identify the quiver of $\K\Styl(A)$ over a field $\K$.
We do this by defining a quiver $Q(A)$ in \S\ref{Q}
together with a $\K$-algebra morphism $\varphi: \K Q(A) \to \K \Styl(Q)$
in \S\ref{quiver-map} that is surjective (proved in \S\ref{surjectivity-quiver-map}) and
whose kernel is an \emph{admissible ideal} (proved in \S\ref{kernel-quiver-map}).
Such a morphism uniquely determines the quiver of an algebra; see \S\ref{quiver-of-stylic-algebra} for details.
Most of the results hold over $\Z$, so we work over $\Z$ whenever possible.

\subsection{A quiver}\label{Q}

We define a {\it right} action of the monoid $A^*$ on the set $\Gamma(A)$ of columns on $A$. It is enough to define the action of each letter on
each column. Let $c$ be a letter and $\gamma$ a column. If $c<\min(\gamma)$, we let $\gamma\cdot c=\gamma \cup c$.
Otherwise, $c\geq \min(\gamma)$ and we let $b=\max\{x\in\gamma : x\leq c\}$; then $\gamma\cdot c=c\cup (\gamma\setminus b)$; we then say that $b$ is {\it bumped}.
Compactly,
\begin{equation*}
    \gamma \cdot c = \big( \gamma \setminus \max\{x \in \gamma : x \leq c\} \big) \cup \big\{c\big\}.
\end{equation*}

We say that the right action of $c$ on $\gamma$ is {\it frank} if $c\geq \min(\gamma)$ and if $c\notin \gamma$. Note that in this case, $\gamma$ and $\gamma\cdot c$ have the same height.

We define a quiver $Q(A)$ with edges labelled in $A$: its set of vertices is $\Gamma(A)$; and there is a labelled edge $\gamma\xrightarrow{c} \gamma'$ if $\gamma\cdot
c=\gamma'$ and if the action is frank; see Figure \ref{3quivers}.
As usual, the label of a path is the word in $A^*$ that is the product of the labels of the edges of the path.

\begin{figure}
\begin{center}
\begin{tikzpicture}[>=latex,line join=bevel,framed, baseline=-50pt, scale=0.8]
  \node (node_0) at (3.5bp,73.5bp) [draw] {$\epsilon$};
  \node (node_1) at (31.5bp,73.5bp) [draw] {$a$};
  \node (node_2) at (31.5bp,6.5bp) [draw] {$b$};
  \node (node_3) at (64.5bp,73.5bp) [draw] {$ba$};
  \draw [black,->] (node_1) to (node_2);
  \draw (40bp,40.0bp) node [font=\small] {$b$};
\end{tikzpicture}
\qquad
\qquad
\begin{tikzpicture}[>=latex,line join=bevel,framed, scale=0.8]
  \node (node_0) at (-10.5bp,145.5bp) [draw] {$\epsilon$};
  \node (node_1) at (31.5bp,145.5bp) [draw] {$a$};
  \node (node_2) at (9.5bp,76.0bp) [draw] {$b$};
  \node (node_3) at (33.5bp,6.5bp) [draw] {$c$};
  \node (node_4) at (100.5bp,145.5bp) [draw] {$ba$};
  \node (node_5) at (100.5bp,76.0bp) [draw] {$ca$};
  \node (node_6) at (100.5bp,6.5bp) [draw] {$cb$};
  \node (node_7) at (138.5bp,145.5bp) [draw] {$cba$};
  \draw [black,->] (node_1) ..controls (22.792bp,134.24bp) and (18.048bp,127.64bp)  .. (15.5bp,121.0bp) .. controls (12.042bp,111.99bp) and (10.531bp,101.28bp)  .. (node_2);
  \definecolor{strokecol}{rgb}{0.0,0.0,0.0};
  \pgfsetstrokecolor{strokecol}
  \draw (20bp,112.0bp) node [font=\small] {$b$};
  \draw [black,->] (node_1) ..controls (34.839bp,134.07bp) and (36.723bp,127.18bp)  .. (37.5bp,121.0bp) .. controls (42.487bp,81.312bp) and (40.868bp,70.858bp)  .. (37.5bp,31.0bp) .. controls (37.29bp,28.521bp) and (36.959bp,25.909bp)  .. (node_3);
  \draw (48.5bp,76.0bp) node [font=\small] {$c$};
  \draw [black,->] (node_2) ..controls (9.349bp,60.728bp) and (10.106bp,43.953bp)  .. (15.5bp,31.0bp) .. controls (16.967bp,27.477bp) and (19.097bp,24.021bp)  .. (node_3);
  \draw (20.5bp,40.0bp) node [font=\small] {$c$};
  \draw [black,->] (node_4) ..controls (100.5bp,128.68bp) and (100.5bp,107.56bp)  .. (node_5);
  \draw (110.0bp,112.0bp) node [font=\small] {$c$};
  \draw [black,->] (node_5) ..controls (100.5bp,59.182bp) and (100.5bp,38.062bp)  .. (node_6);
  \draw (110.0bp,40.0bp) node [font=\small] {$b$};
\end{tikzpicture}
\end{center}

\begin{center}
\begin{tikzpicture}[>=latex,line join=bevel, scale=0.8, framed]
  \node (node_0) at (-16.084bp,289.5bp) [draw] {$\epsilon$};
  \node (node_1) at (52.084bp,289.5bp) [draw] {$a$};
  \node (node_2) at (10.084bp,220.0bp) [draw] {$b$};
  \node (node_3) at (40.084bp,148.0bp) [draw] {$c$};
  \node (node_4) at (40.084bp,76.0bp) [draw] {$d$};

  \node (node_5) at (197.08bp,289.5bp) [draw] {$ba$};
  \node (node_6) at (174.08bp,220.0bp) [draw] {$ca$};
  \node (node_8) at (198.08bp,148.0bp) [draw] {$da$};
  \node (node_7) at (127.08bp,148.0bp) [draw] {$cb$};
  \node (node_9) at (174.08bp,76.0bp) [draw] {$db$};
  \node (node_10) at (197.08bp,6.5bp) [draw] {$dc$};

  \node (node_11) at (285.08bp,289.5bp) [draw] {$cba$};
  \node (node_12) at (285.08bp,220.0bp) [draw] {$dba$};
  \node (node_13) at (285.08bp,148.0bp) [draw] {$dca$};
  \node (node_14) at (285.08bp,76.0bp) [draw] {$dcb$};
  \node (node_15) at (360.08bp,289.5bp) [draw] {$dcba$};

  \draw [black,->] (node_1) ..controls (37.816bp,283.2bp) and (23.186bp,276.23bp)  .. (16.084bp,265.0bp) .. controls (10.811bp,256.66bp) and (9.3099bp,245.67bp)  .. (node_2);
  \draw (19.084bp,256.0bp) node [font=\small] {$b$};
  \draw [black,->] (node_1) ..controls (49.887bp,262.97bp) and (44.096bp,195.65bp)  .. (node_3);
  \draw (54.084bp,220.0bp) node [font=\small] {$c$};
  \draw [black,->] (node_1) ..controls (58.066bp,278.21bp) and (61.705bp,271.35bp)  .. (64.084bp,265.0bp) .. controls (69.871bp,249.56bp) and (71.069bp,245.37bp)  .. (73.084bp,229.0bp) .. controls (77.966bp,189.34bp) and (85.587bp,146.09bp)  .. (66.084bp,103.0bp) .. controls (63.451bp,97.184bp) and (58.908bp,91.945bp)  .. (node_4);
  \draw (86.084bp,184.0bp) node [font=\small] {$d$};
  \draw [black,->] (node_2) ..controls (9.5925bp,204.58bp) and (10.101bp,187.69bp)  .. (16.084bp,175.0bp) .. controls (18.51bp,169.85bp) and (22.322bp,165.03bp)  .. (node_3);
  \draw (21.084bp,184.0bp) node [font=\small] {$c$};
  \draw [black,->] (node_2) ..controls (4.2842bp,200.26bp) and (-4.4016bp,166.15bp)  .. (3.0837bp,139.0bp) .. controls (8.0786bp,120.88bp) and (19.913bp,102.72bp)  .. (node_4);
  \draw (9.084bp,148.0bp) node [font=\small] {$d$};
  \draw [black,->] (node_3) ..controls (40.084bp,130.49bp) and (40.084bp,108.25bp)  .. (node_4);
  \draw (47.084bp,112.0bp) node [font=\small] {$d$};

  \draw [black,->] (node_5) ..controls (186.58bp,278.56bp) and (180.92bp,272.0bp)  .. (178.08bp,265.0bp) .. controls (174.48bp,256.11bp) and (173.44bp,245.39bp)  .. (node_6);
  \draw (181.58bp,256.0bp) node [font=\small] {$c$};
  \draw [black,->] (node_5) ..controls (201.53bp,278.16bp) and (204.04bp,271.29bp)  .. (205.08bp,265.0bp) .. controls (211.62bp,225.54bp) and (210.33bp,214.65bp)  .. (205.08bp,175.0bp) .. controls (204.63bp,171.59bp) and (203.86bp,167.99bp)  .. (node_8);
  \draw (218.58bp,220.0bp) node [font=\small] {$d$};
  \draw [black,->] (node_6) ..controls (158.48bp,208.88bp) and (149.05bp,201.52bp)  .. (143.08bp,193.0bp) .. controls (137.05bp,184.39bp) and (133.03bp,173.24bp)  .. (node_7);
  \draw (147.0bp,184.0bp) node [font=\small] {$b$};
  \draw [black,->] (node_6) ..controls (173.12bp,204.75bp) and (172.99bp,188.0bp)  .. (178.08bp,175.0bp) .. controls (179.81bp,170.59bp) and (182.53bp,166.29bp)  .. (node_8);
  \draw (183.58bp,184.0bp) node [font=\small] {$d$};
  \draw [black,->] (node_7) ..controls (129.99bp,132.62bp) and (134.51bp,115.25bp)  .. (143.08bp,103.0bp) .. controls (146.9bp,97.544bp) and (152.15bp,92.564bp)  .. (node_9);
  \draw (147.58bp,112.0bp) node [font=\small] {$d$};
  \draw [black,->] (node_8) ..controls (187.36bp,136.31bp) and (181.16bp,128.85bp)  .. (178.08bp,121.0bp) .. controls (174.58bp,112.06bp) and (173.55bp,101.35bp)  .. (node_9);
  \draw (183.58bp,112.0bp) node [font=\small] {$b$};
  \draw [black,->] (node_8) ..controls (201.86bp,135.89bp) and (204.15bp,128.04bp)  .. (205.08bp,121.0bp) .. controls (210.33bp,81.345bp) and (211.62bp,70.463bp)  .. (205.08bp,31.0bp) .. controls (204.63bp,28.247bp) and (203.89bp,25.383bp)  .. (node_10);
  \draw (218.58bp,76.0bp) node [font=\small] {$c$};
  \draw [black,->] (node_9) ..controls (173.03bp,60.718bp) and (172.84bp,43.937bp)  .. (178.08bp,31.0bp) .. controls (179.55bp,27.39bp) and (181.76bp,23.897bp)  .. (node_10);
  \draw (184.58bp,40.0bp) node [font=\small] {$c$};

  \draw [black,->] (node_11) ..controls (285.08bp,272.68bp) and (285.08bp,251.56bp)  .. (node_12);
  \draw (296.08bp,256.0bp) node [font=\small] {$d$};
  \draw [black,->] (node_12) ..controls (285.08bp,202.49bp) and (285.08bp,180.25bp)  .. (node_13);
  \draw (296.08bp,184.0bp) node [font=\small] {$c$};
  \draw [black,->] (node_13) ..controls (285.08bp,130.49bp) and (285.08bp,108.25bp)  .. (node_14);
  \draw (296.08bp,112.0bp) node [font=\small] {$b$};

\end{tikzpicture}
\end{center}
\caption{The quivers for alphabets of cardinality $2,3,4$; the columns are
represented by decreasing words and the empty word is denoted $\epsilon$.}
\label{3quivers}
\end{figure}
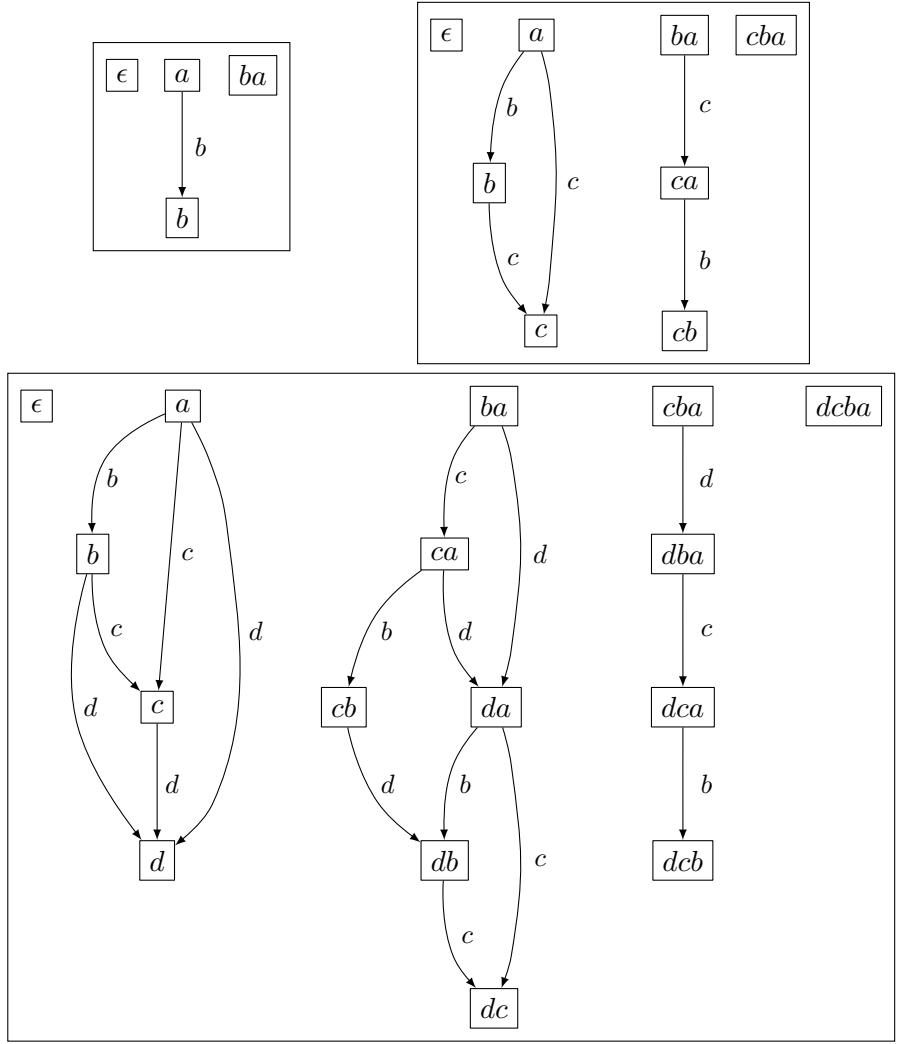

For later use, we note the following result relating the left and right actions. The proof is left to the reader.

\begin{lemma}\label{left-right-action}
For two columns of the same height $\gamma,\delta$, and two letters $b,c$, the two following conditions are equivalent:

(i) $b\cdot \delta=\gamma$ and $c$ is bumped;

(ii) $\gamma \cdot c=\delta$, and $b$ is bumped.
\end{lemma}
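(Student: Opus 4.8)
The plan is to translate both conditions into explicit descriptions of the columns as subsets of $A$ and then check that the descriptions coincide. The first observation is that in either condition a letter is bumped, so in both the left and the right action we are in the second (``bumping'') case of the insertion rule rather than the first (``adjoining'') case; this is consistent with the hypothesis that $\gamma$ and $\delta$ have the same height. Unwinding the definitions, condition (i) becomes: $c$ is the smallest element of $\delta$ with $c \geq b$, and $\gamma = (\delta \setminus c) \cup b$; while condition (ii) becomes: $b$ is the largest element of $\gamma$ with $b \leq c$, and $\delta = (\gamma \setminus b) \cup c$. In particular each condition already forces $b \leq c$.

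Next I would show that the two set equalities describe the same swap. Assuming (i), since $c = \min\{y \in \delta : y \geq b\}$ we have $c \in \delta$ and, unless $b = c$, also $b \notin \delta$ (as $b$ would lie in $[b,c)$); hence $\gamma = (\delta \setminus c) \cup b$ is obtained from $\delta$ by replacing $c$ with $b$, and the inverse swap replaces $b$ with $c$, giving $\delta = (\gamma \setminus b) \cup c$. The symmetric computation runs from (ii). The degenerate case $b = c$ must be handled separately: there $b \in \gamma \cap \delta$, $\gamma = \delta$, and both conditions hold trivially.

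The crux is to match the extremality conditions, and the key point is that $c = \min\{y \in \delta : y \geq b\}$ says exactly that $\delta$ contains no letter in the half-open interval $[b,c)$; after performing the swap, which only moves the two endpoints $b$ and $c$, this is equivalent to saying that $\gamma$ contains no letter in $(b,c]$, i.e.\ that $b = \max\{x \in \gamma : x \leq c\}$. This gap condition is the symmetric invariant underlying the equivalence, and verifying it in both directions --- using that neither the swap nor the restriction to letters $\leq c$ (respectively $\geq b$) disturbs letters strictly between $b$ and $c$ --- is the main, though routine, step. I expect the only real obstacle to be the bookkeeping of set membership (whether $b \in \delta$, whether $c \in \gamma$, and so on) across the swap, which is precisely where the equal-letter case $b = c$ and the boundary behaviour at $b$ and $c$ require care.
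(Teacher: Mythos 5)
The paper gives no proof of this lemma (it is explicitly ``left to the reader''), so there is nothing to compare against; your argument --- unwinding both insertion rules into the swap $\gamma=(\delta\setminus c)\cup b$, $\delta=(\gamma\setminus b)\cup c$ and matching the two extremality conditions via the equivalence of the gap conditions $\delta\cap[b,c)=\emptyset$ and $\gamma\cap(b,c]=\emptyset$ --- is correct and is surely the intended routine verification. The only imprecision is the remark that in the case $b=c$ ``both conditions hold trivially'': more accurately, each condition is then equivalent to $b=c\in\gamma=\delta$, so the equivalence still holds, but neither condition is automatic.
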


\subsection{A lemma on edges and idempotents}

We give a technical, but important, result on the idempotents of the stylic algebra and the quiver introduced previously.

\begin{lemma}\label{identity} Let $\gamma\xrightarrow{c} \delta$ be an edge in the quiver $Q(A)$, and denote by $b$ the bumped letter, so that
$\delta=c\cup (\gamma\setminus b)$, and $b\in \gamma, c\notin\gamma, b\notin \delta, c\in \delta$. Then in $\Z\Styl(A)$
\begin{equation*}
    be_\gamma c=bce_\delta
    \qquad\text{and}\qquad
    e_\gamma c e_\delta=e_\gamma c.
\end{equation*}
\end{lemma}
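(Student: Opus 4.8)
The plan is to prove both identities together by induction on $|A|$, removing the least letter $a=\min(A)$, exactly as in the proof of Theorem~\ref{system}. Writing $e'_{\gamma'}$ for the idempotents \eqref{egamma} relative to the smaller alphabet $A\setminus a$, the factorizations $e_\gamma=e'_{\gamma\setminus a}\,a$ (when $a\in\gamma$) and $e_\gamma=(1-a)\,e'_\gamma$ (when $a\notin\gamma$) will let me push each computation down to $\Z\Styl(A\setminus a)$. Since $a\le b<c$, the letter $a$ can be neither $c$ nor a letter of $\gamma$ lying above $c$, so there are exactly three cases: (C1) $a\in\gamma\cap\delta$; (C2) $a=b$, the bumped letter, which forces $\gamma$ to have no letter below $b$; or (C3) $a\notin\gamma\cup\delta$, and then $a<b$. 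In (C1) and (C3) the edge $\gamma\xrightarrow{c}\delta$ descends to an edge of $Q(A\setminus a)$ (between $\gamma\setminus a$ and $\delta\setminus a$, resp. between $\gamma$ and $\delta$), so the inductive hypothesis applies; case (C2) is a genuine base case, as no such edge is inherited.

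For the second identity $e_\gamma c\,e_\delta=e_\gamma c$ both inductive cases are clean. In (C3) I would use Lemma~\ref{axa}(iii) to collapse $(1-a)\,e'_\gamma c\,(1-a)=(1-a)\,e'_\gamma c$ and then invoke the inductive hypothesis $e'_\gamma c\,e'_\delta=e'_\gamma c$. In (C1) I would use Lemma~\ref{axa}(i) in the form $a\,e'_{\delta\setminus a}\,a=e'_{\delta\setminus a}\,a$ to remove the inner $a$, together with the single Knuth move $m\,a\,c\equiv_{styl}m\,c\,a$: this is an instance of \eqref{plactic-abc}, valid because every word occurring in $e'_{\gamma\setminus a}$ ends in the smallest letter $m=\min(\gamma\setminus a)$ of that column, which satisfies $a<m<c$. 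Combined with the inductive hypothesis this gives $e'_{\gamma\setminus a}\,a\,c=e'_{\gamma\setminus a}\,c\,a$ and closes the case. The same move settles case (C1) of the first identity as well, since there $b\,e_\gamma c=b\,e'_{\gamma\setminus a}\,a\,c=b\,e'_{\gamma\setminus a}\,c\,a=bc\,e_\delta$, the last equality being the inductive hypothesis for the first identity, multiplied on the right by $a$.

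The first identity $b\,e_\gamma c=bc\,e_\delta$ is more delicate in case (C3): the minimal letter survives only in the leading factor $(1-a)$, so Lemma~\ref{axa}(iii) is unavailable. Here I would instead expand $(1-a)$, use the inductive hypothesis $b\,e'_\gamma c=bc\,e'_\delta$, and reduce the claim to the vanishing of the leftover term $b\bigl(a\,e'_\gamma c-c\,a\,e'_\delta\bigr)$; I expect this to follow by repeatedly applying the relations $bab\equiv_{styl}bba$ (from \eqref{plactic-ab}) and $bac\equiv_{styl}bca$ to the prefix $b\,a\cdots$, so that the prepended $b$ absorbs the stray $a$.

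I expect the main obstacle to be the base case (C2). There, after using \eqref{egammagamma} to write $e_\gamma=e'_{\gamma\setminus a}\,b$ with $\gamma\setminus a$ consisting only of letters above $c$, I would use Lemma~\ref{axa}(i) to erase the prepended $b$ (every letter of $e'_{\gamma\setminus a}$ exceeds $b$) and the stylic relation $bcb\equiv_{styl}cb$, reducing both identities to equalities relating $e'_{\gamma\setminus a}$, $e'_\delta$ and the letters $b,c$ over $A\setminus b$. Since $b=\min(A)$ sits below every other letter, establishing these requires sliding $b$ (and $c$) past the block of letters above $c$ and those strictly between $b$ and $c$, which is precisely what Lemma~\ref{superplax} provides; carrying this out cleanly is the part I expect to demand the most care. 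Finally, I note that once the second identity is known, left-multiplying it by $b$ yields $b\,e_\gamma c\,e_\delta=b\,e_\gamma c$, so the remaining content of the first identity is exactly $b\,e_\gamma c\,e_\delta=bc\,e_\delta$; organising the base case around this reformulation should streamline the argument.
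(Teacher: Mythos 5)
Your induction scheme, case split, and treatment of the two inductive cases match the paper's proof almost exactly: the paper also removes $a=\min(A)$, notes that $a\notin\gamma$, $a\in\delta$ is impossible, handles $a\in\gamma\cap\delta$ via the single Knuth move $e'_{\gamma\setminus a}ac=e'_{\gamma\setminus a}ca$ (legitimate because $\min(\gamma\setminus a)\leq b<c$), and handles $a\notin\gamma\cup\delta$ via $b(1-a)b=b(1-a)$ and $bac=bca$ — your ``absorb the stray $a$ with the prepended $b$'' reduction $bae'_\gamma c=babe'_\gamma c=ba(bce'_\delta)=bace'_\delta=bcae'_\delta$ is the same computation. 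The one structural difference is that the paper does not prove the second identity by induction at all: it derives it uniformly from the first via $e_\gamma ce_\delta=e_\gamma bce_\delta=e_\gamma be_\gamma c=e_\gamma c$, using $e_\gamma=e_\gamma\gamma=e_\gamma\gamma b=e_\gamma b$. That route is worth adopting, because it spares you from having to establish the second identity in the base case, where no inductive hypothesis is available; your closing suggestion to run the dependence in the other direction (deriving part of identity 1 from identity 2) would force you to prove identity 2 from scratch there.

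The genuine gap is the base case (C2), which is where essentially all of the content of this lemma lives — it occupies well over half of the paper's proof — and which you explicitly leave as a plan. Two specific difficulties are not engaged. First, the factor structure of the two idempotents is mismatched: writing $y_1<\cdots<y_s$ for the letters strictly between $a=b$ and $c$ lying in neither column and $z_1<\cdots<z_t$ for those above $c$, one has $e_\gamma=\prod_j(1-y_j)(1-c)\prod_k(1-z_k)\gamma$ while $e_\delta=(1-a)\prod_j(1-y_j)\prod_k(1-z_k)\delta$; the factor $(1-c)$ appears only on the left and $(1-a)$ only on the right, and the whole argument consists in showing that all the cross terms involving the $y_j$ collapse (on one side via $y\prod_k(1-z_k)a\delta=yc\prod_k(1-z_k)a\delta$, proved with both identities of Lemma~\ref{superplax} and Lemma~\ref{axa}(i); on the other via $(ac-ca)y=0$, a plactic relation). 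Your sketch only mentions ``sliding $b$ and $c$ past blocks of letters,'' which names the toolkit but not these cancellations. Second, the computation hinges on the identity $\gamma c\equiv_{styl}a\delta$ (the paper's equation~\eqref{gammac}, a plactic fact read off from Lemma~\ref{left-right-action}), which is what lets one compare $ae_\gamma c$ with $ace_\delta$ at all; nothing in your proposal plays this role. Until case (C2) is carried out, the lemma is not proved.
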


\begin{proof} I. We prove the first identity. Let $a=\min(A)$. As in the proof of Theorem \ref{system}, denote by $e'_{\gamma'}$ the idempotents (\ref{egamma}) relative to the alphabet $A\setminus a$.

1. Suppose that $a\in \gamma\cap \delta$. Since the action $\gamma\cdot c$ is frank, and since $a$ cannot be bumped, we have $a<b<c$. Let $\gamma'=\gamma \setminus a$ and $\delta'=\delta\setminus a$. Then $\delta'=\gamma'\cdot c$ and the action is frank. By induction, we deduce that
$be'_{\gamma'} c=bce'_{\delta'}$. Note that the minimum of $\gamma'$ is a letter $x$ such that $a<x<c$; thus $xac=xca$ and since $x$ is the last factor in the product (\ref{egamma}) defining $e'_{\gamma'}$, we have $e'_{\gamma'}ac=e'_{\gamma'}ca$. We have $e_\gamma=e'_{\gamma'}a$ and $e_\delta=e'_{\delta'}a$.
Thus
$be_\gamma c= be'_{\gamma'}ac=be'_{\gamma'}ca=bce'_{\delta'}a=bce_{\delta}$.

2. Suppose that $a\notin \gamma\cup \delta$. Then $a<b<c$. Moreover $\delta \xrightarrow{c} \delta'$ is an edge in the quiver $Q(A\setminus a)$ and $b$ is bumped. With notations similar to 1, we have $e_\gamma = (1-a)e'_{\gamma'}$ and $e_\delta=(1-a)e'_{\delta'}$. Since $b(1-a)b=b^2-bab=b^2-ba=b(1-a)$ and $bac=bca$, we have
$be_\gamma c= b(1-a)e'_{\gamma'}c
=b(1-a)be'_{\gamma'}c=b(1-a)bce'_{\delta'}$  (by induction) $=b(1-a)ce'_{\delta'}=bc(1-a)e'_{\delta'}=bce_\delta$.

3. Suppose that $a\in\gamma$ and $a\notin \delta$. Then the bumped letter is $b=a$. We denote by $\gamma$ and $\delta$ the decreasing words associated with these two columns. Since $c\in\delta$, we have $\delta=\delta_1 c\delta_2$, where each letter in $\delta_1$ is larger than $c$; hence
$c\delta\equiv_{styl}\delta$ by Lemma \ref{axa} (i). Moreover, $\delta c\equiv_{styl}\delta$ since $c$ is the smallest, hence last, letter of $\delta=\delta'c$ and $c^2 = c$.

We have \begin{equation}\label{gammac}
\gamma c\equiv_ {styl}a\delta,
\end{equation}
since this holds even plactically as one sees by computing the image under $P$ of both sides (for $P(a\delta)$, Schensted left insert $a$ into $\delta$ and use Lemma~\ref{left-right-action}).

Let
    $y_1, \ldots, y_s, z_1, \ldots, z_t$
    be the letters in $A$ that do not appear in $\gamma$ nor in $\delta$,
    ordered so that
    \begin{equation*}
        a ~<~
        y_1 < \cdots < y_s
        ~<~ c ~<~
        z_1 < \cdots < z_t.
    \end{equation*}
Then
\begin{align*}
    e_\gamma &= \prod_{j=1}^{s}(1 - y_j) (1 - c) \prod_{k=1}^{t}(1 - z_k) \gamma
    \\
    e_\delta &= (1 - a) \prod_{j=1}^{s}(1 - y_j) \prod_{k=1}^{t}(1 - z_k) \delta.
\end{align*}
Thus, by (\ref{gammac}), $ae_\gamma c=a\prod_{j=1}^{s}(1 - y_j) (1 - c) \prod_{k=1}^{t}(1 - z_k)a\delta$.
Note that $\prod_{j=1}^{s}(1 - y_j) $ is equal to $1$ plus a linear combination of $uy$, with $$a<y<c<z_1<\cdots <z_t. $$
Therefore, $ae_\gamma c$ is equal to $a (1 - c) \prod_{k=1}^{t}(1 - z_k)a\delta$ plus a linear combination of $auy (1 - c) \prod_{k=1}^{t}(1 - z_k)a\delta$, and we show that each term in the linear combination vanishes.

Note that it suffices to show that $y \prod_{k=1}^{t}(1 - z_k)a\delta = y c \prod_{k=1}^{t}(1 - z_k)a\delta$.
We prove this equality, starting from the right-hand side: since $\delta=\delta'c$, we have
$yc\prod_{k=1}^{t}
(1 - z_k)a\delta=yc\prod_{k=1}^{t}
(1 - z_k)a\delta' c=yac\prod_{k=1}^{t}(1 - z_k)\delta' c$ (since by the second identity in Lemma \ref{superplax}, we have $
yac\prod_{k=1}^{t}(1 - z_k)=yc\prod_{k=1}^{t}
(1 - z_k)a$) $=ya\prod_{k=1}^{t}(1 -
z_k)\delta' c$ (by Lemma~\ref{axa}~(i), since all letters $z_i$ and in $\delta'$ are $>c$)
$=y\prod_{k=1}^{t}(1 -
z_k)a\delta$, by the same identity in Lemma \ref{superplax}.

 It follows that $ae_\gamma c=a(1-c)\prod_{k=1}^{t}(1 - z_k)a\delta=a(1-c)\prod_{k=1}^{t}(1 - z_k)ac\delta
 $ (since $c\delta=\delta$) $
 =a(1-c)a\prod_{k=1}^{t}(1 - z_k)c\delta$ (by the first identity in Lemma \ref{superplax}) $=a(1-c)ac\prod_{k=1}^{t}(1 - z_k)c\delta$
 (by Lemma~\ref{axa}~(i))
$ =(ac-ca)\prod_{k=1}^{t}(1 - z_k)\delta$.

On the other hand, we have $ace_\delta=ac(1 - a) \prod_{j=1}^{s}(1 - y_j) \prod_{k=1}^{t}(1 - z_k) \delta=(ac-ca)\prod_{j=1}^{s}(1 - y_j) \prod_{k=1}^{t}(1 -
z_k) \delta$. Note that $\prod_{j=1}^{s}(1 - y_j) $ is equal to $1$ plus a linear combination of $yu$, with $a<y<c$. Since $(ac-ca)y=acy-cay=0$ (plactic relation), we obtain $ace_\delta=(ac-ca)\prod_{k=1}^{t}(1 - z_k)\delta$.

It follows that $ae_\gamma c=ace_\delta$.

4. The last case to consider is when $a \notin \gamma$ and $a \in \delta$; however, it
does not occur because the action $\gamma\cdot c$ is frank (in particular, if
$a \notin \gamma$, then $a \notin \delta$).

II. We prove now the second identity. Note that $\gamma=\gamma_1 b\gamma_2$, where each letter in $\gamma_2$ is smaller that $b$; hence $\gamma b=\gamma$, by the dual statement of Lemma \ref{axa} (i). We have, using the fact that $\gamma$ is idempotent in $\Styl(A)$ (see the sentence before (\ref{egammagamma})):
    \begin{align*}
        e_{\gamma} c e_{\delta}
        & = e_{\gamma} b c e_{\delta}
        & \text{(since \small$e_{\gamma} = e_\gamma \gamma= e_\gamma \gamma b = e_\gamma b$)} \\
        & = e_{\gamma} b  e_{\gamma} c
        & \text{\small(by the first identity in the lemma, already proved)} \\
        & =  e_{\gamma}  e_{\gamma}c
        & \text{\small(since $e_\gamma b=e_\gamma$)} \\
        & = e_\gamma c
        & \text{\small(since $e_\gamma$ is idempotent)}.
        & \qedhere
    \end{align*}
\end{proof}

\subsection{A quiver map}
\label{quiver-map}

Let $Q=Q(A)$ be the quiver defined in Subsection \ref{Q}.
The {\it path algebra} $\Z Q$ is the free $\Z$-module with basis the set of
paths in the quiver, including an empty path around each vertex $\gamma$ (this empty path is denoted $\gamma$); the product is
the unique product extending the natural product of paths.

We define a $\Z$-linear mapping $\varphi:\Z Q\to \Z\Styl(A)$ as follows:
\begin{itemize}
    \item
        if $\gamma$ is an empty path, then
        \begin{equation*}
            \varphi(\gamma) = e_{\gamma};
        \end{equation*}

    \item
        if
        \begin{equation}\label{path}
        \gamma_0 \xrightarrow{c_1} \gamma_1 \xrightarrow{c_2} \cdots \xrightarrow{c_l} \gamma_l
        \end{equation}
        is a path in $Q$, then its image under $\varphi$ is
        $$
        e_{\gamma_0} c_1 e_{\gamma_1} c_2 \cdots c_l e_{\gamma_l}.
        $$
\end{itemize}
Note that this mapping is a $\Z$-algebra homomorphism.

\begin{theorem}\label{image} The image under $\varphi$ of a path from $\gamma$ to $\delta$ with label $u$ is $e_\gamma u$.
\end{theorem}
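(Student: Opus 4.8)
The plan is to prove the identity by induction on the length $l$ of the path, since Lemma~\ref{identity} already supplies the essential algebraic input. Write the path as in \eqref{path}, so that $\gamma = \gamma_0$, $\delta = \gamma_l$, and the label is $u = c_1 c_2 \cdots c_l$. By definition, its image under $\varphi$ is
$$
e_{\gamma_0} c_1 e_{\gamma_1} c_2 \cdots c_l e_{\gamma_l},
$$
and the goal is to show this equals $e_{\gamma_0} u = e_{\gamma_0} c_1 c_2 \cdots c_l$; that is, that all of the idempotents $e_{\gamma_1}, \ldots, e_{\gamma_l}$ can be deleted. For the base case $l = 0$, the empty path at $\gamma$ has label the empty word, and $\varphi(\gamma) = e_\gamma$, which is $e_\gamma u$ since $u = 1$ in $\Styl(A)$.

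For the inductive step I would strip off the final edge from the right. The last edge is $\gamma_{l-1} \xrightarrow{c_l} \gamma_l$, and the second identity in Lemma~\ref{identity} gives $e_{\gamma_{l-1}} c_l e_{\gamma_l} = e_{\gamma_{l-1}} c_l$. Substituting this into the product, the terminal idempotent $e_{\gamma_l}$ disappears, leaving
$$
e_{\gamma_0} c_1 e_{\gamma_1} c_2 \cdots c_{l-1} e_{\gamma_{l-1}} c_l.
$$
The prefix $e_{\gamma_0} c_1 e_{\gamma_1} c_2 \cdots c_{l-1} e_{\gamma_{l-1}}$ is exactly the image under $\varphi$ of the truncated path $\gamma_0 \xrightarrow{c_1} \cdots \xrightarrow{c_{l-1}} \gamma_{l-1}$, which has length $l-1$. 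By the induction hypothesis it equals $e_{\gamma_0} c_1 \cdots c_{l-1}$, and multiplying on the right by $c_l$ yields $e_{\gamma_0} c_1 \cdots c_l = e_{\gamma_0} u$, as required.

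The argument is short because Lemma~\ref{identity} does all the work; the only point that requires care is the \emph{direction} of the induction. One must peel edges off the right end rather than the left: after applying $e_{\gamma_{l-1}} c_l e_{\gamma_l} = e_{\gamma_{l-1}} c_l$, the trailing letter $c_l$ factors cleanly out of a shortened expression that is still genuinely $\varphi$ of a subpath, so the induction hypothesis applies verbatim. Peeling from the left instead (via the analogous $e_{\gamma_0} c_1 e_{\gamma_1} = e_{\gamma_0} c_1$) would leave a dangling letter $c_1$ in front of a product missing its leading idempotent, which is not of the form $\varphi$ assigns to a subpath, and so the hypothesis would not directly apply.

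I expect no genuine obstacle beyond this bookkeeping: the content of the theorem is entirely concentrated in the second identity of Lemma~\ref{identity}, and Theorem~\ref{image} is the clean global statement obtained by iterating it. Notably, the identity $e_{\gamma_{l-1}} c_l e_{\gamma_l} = e_{\gamma_{l-1}} c_l$ does not require the action of $c_l$ to be frank in order to be applied here — frankness is built into the definition of $Q(A)$, so every edge of the path automatically supplies an instance of the lemma — which is why the induction goes through uniformly for all paths.
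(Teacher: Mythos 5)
Your proof is correct and follows essentially the same route as the paper: induction on the length of the path, peeling off the final edge via the second identity of Lemma~\ref{identity} ($e_{\gamma_{l-1}} c_l e_{\gamma_l} = e_{\gamma_{l-1}} c_l$), recognizing the remaining prefix as $\varphi$ of the truncated path, and applying the induction hypothesis. The only cosmetic difference is that the paper phrases the step as $\varphi(p)e_{\gamma_l}c_{l+1}e_{\gamma_{l+1}} = \varphi(p)e_{\gamma_l}c_{l+1} = \varphi(p)c_{l+1}$, using the idempotence of $e_{\gamma_l}$, which is the same computation.
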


\begin{proof} This is clear if the path is of length $0$. Suppose it is true for each path of length $l\geq 0$. Consider a path
$\gamma_0 \xrightarrow{c_1} \gamma_1 \xrightarrow{c_2} \cdots \xrightarrow{c_{l+1}} \gamma_{l+1}$. Its image under $\varphi$ is by definition $x=\varphi(p)e_{\gamma_{l}}c_{l+1}e_{\gamma_{l+1}}$, where $p$ is the path (\ref{path}). Thus $x=\varphi(p)e_{\gamma_{l}}c_{l+1}$  (by the second equality in Lemma \ref{identity})
$=\varphi(p)c_{l+1}$ (since $\varphi (p)e_{\gamma_l}=\varphi(p)$ by definition of $\varphi$ and the idempotence of  $e_{\gamma_l}$)
$=e_{\gamma_0}c_1\cdots c_lc_{l+1}$ (by induction).
\end{proof}

\begin{corollary}\label{same-image} Consider two paths in $Q(A)$ starting from the same vertex $\gamma$, with labels $u,v$. If $\gamma u\equiv_{styl} \gamma v$, then these
paths have the same image under $\varphi$.
\end{corollary}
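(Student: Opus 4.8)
The plan is to reduce the statement to the two facts already in hand: Theorem~\ref{image}, which computes the image of a path under $\varphi$ solely in terms of its starting vertex and its label, and the identity~\eqref{egammagamma}, which lets the column $\gamma$ be absorbed into its idempotent $e_\gamma$. The key observation is that $\equiv_{styl}$ is, by definition, equality in the monoid $\Styl(A)$, and hence equality of basis vectors in $\Z\Styl(A)$.

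First I would apply Theorem~\ref{image} to each of the two paths. A path starting at $\gamma$ with label $u$ has image $e_\gamma u$, and the path with label $v$ has image $e_\gamma v$; note that the terminal vertices play no role in this formula, so there is nothing to check about where the paths end. Thus it suffices to prove the single equality $e_\gamma u = e_\gamma v$ in $\Z\Styl(A)$.

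Next I would use~\eqref{egammagamma} in the form $e_\gamma = e_\gamma \gamma$ to insert the column $\gamma$ after the idempotent, and then invoke the hypothesis. Explicitly, the hypothesis $\gamma u \equiv_{styl} \gamma v$ says that $\gamma u$ and $\gamma v$ represent the same element of $\Styl(A)$, hence the same basis vector of $\Z\Styl(A)$, so $\gamma u = \gamma v$ there. Multiplying on the left by $e_\gamma$ and using~\eqref{egammagamma} at both ends yields the chain
$$
e_\gamma u = e_\gamma \gamma u = e_\gamma \gamma v = e_\gamma v,
$$
which is exactly the desired conclusion.

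I do not expect any substantial obstacle here: all of the mathematical content sits in Theorem~\ref{image} and the identity~\eqref{egammagamma}, and the corollary is a short formal consequence. The only care needed is the routine bookkeeping of reading columns as decreasing words, words as their images in $\Styl(A)$, and the congruence $\equiv_{styl}$ as genuine equality inside the monoid algebra $\Z\Styl(A)$.
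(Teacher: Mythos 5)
Your proof is correct and follows exactly the paper's own argument: apply Theorem~\ref{image} to reduce to showing $e_\gamma u = e_\gamma v$, then insert $\gamma$ via~\eqref{egammagamma} and use that $\gamma u$ and $\gamma v$ are equal in $\Styl(A)$. No differences worth noting.
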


\begin{proof} The images of these paths are $e_\gamma u$ and $e_\gamma v$, respectively. These elements are by (\ref{egammagamma}) equal to $e_\gamma \gamma u$ and $e_\gamma \gamma v$. Thus,
the lemma follows.
\end{proof}

\subsection{Extended quiver}
\label{extended-quiver}

The {\it extended quiver} $Q'(A)$ has the same set of vertices as $Q(A)$, has all edges of $Q(A)$, together with new edges, which are loops: for
each column $\gamma$ and each $c\in\gamma$, we have in $Q'(A)$ the edge $$\gamma\xrightarrow{c}\gamma.$$ It is clearly a deterministic automaton. Note that if $c\in A$ and $\gamma \in \Gamma(A)$, there is an edge labelled $c$ starting from $\gamma$ in $Q'(A)$ if and only if $c\geq min(\gamma)$. Moreover, if for $\gamma,\delta\in \Gamma(A), w\in A^*$, there is a path $\gamma \xrightarrow{w}\delta$ in $Q'(A)$, then $\delta=\gamma\cdot w$.

\begin{proposition}\label{extended-path} Let $x\in \Styl(A)$ and denote by $\gamma w$ the column-reading word of the
$N$-tableau $N(x)$ of $x$, with $\gamma$ being the first column of $N(x)$. Then there is a unique path in the extended quiver, starting
from $\gamma$, with label $w$.
\end{proposition}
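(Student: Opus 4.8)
The plan is to separate uniqueness from existence; uniqueness is immediate and all the work is in existence.

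\emph{Uniqueness.} Since $Q'(A)$ is a deterministic automaton, there is at most one path out of the vertex $\gamma$ carrying a prescribed label $w$, so it suffices to produce one.

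\emph{Existence.} Recall from \S\ref{extended-quiver} that there is an edge labelled $c$ out of a vertex $\delta$ if and only if $c \geq \min(\delta)$, and that every such edge (a loop when $c \in \delta$, a frank edge when $c \notin \delta$) preserves the height of the column, whereas the remaining case $c < \min(\delta)$ is exactly the one in which the right action $\delta \cdot c = \delta \cup \{c\}$ increases the height by one. Consequently, along the run of the right action reading $w = c_1 \cdots c_l$ starting at $\gamma$, the height of the running column $\gamma \cdot (c_1 \cdots c_i)$ is non-decreasing, and the desired path exists in $Q'(A)$ precisely when this height never strictly increases, i.e. when $\mathrm{height}(\gamma \cdot w') = \mathrm{height}(\gamma)$ for every prefix $w'$ of $w$. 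Writing $h = \mathrm{height}(\gamma)$, monotonicity already gives $\mathrm{height}(\gamma \cdot w') \geq h$, so the real point is the reverse inequality $\mathrm{height}(\gamma \cdot w') \leq h$.

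I would obtain this bound by translating height into a subsequence statistic. The key step is the following lemma, which I would isolate and prove separately: \emph{for every word $u \in A^*$, the height of the column $\emptyset \cdot u$ produced by the right action equals the length of the longest strictly decreasing subsequence of $u$.} Granting this, and using that $\emptyset \cdot \gamma = \gamma$ (immediate from the definition of the right action) together with $\gamma \cdot w' = \emptyset \cdot (\gamma w')$, I get $\mathrm{height}(\gamma \cdot w') = \mathrm{lds}(\gamma w')$, where $\mathrm{lds}$ denotes that length. Now $\gamma w'$ is a prefix of the full column-reading word $v = \gamma w$ of $N(x)$, so any strictly decreasing subsequence of $\gamma w'$ is one of $v$, giving $\mathrm{lds}(\gamma w') \leq \mathrm{lds}(v)$; and by Schensted's theorem together with $P(v) = N(x)$ (equation~\eqref{Pw=T}), $\mathrm{lds}(v)$ equals the number of rows of $N(x)$, which is the length of its tallest, hence first, column, namely $h$. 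Therefore $\mathrm{height}(\gamma \cdot w') \leq h$; the height stays constant equal to $h$, no step increases it, and the path exists.

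The main obstacle is the key lemma relating the right-action height to the longest strictly decreasing subsequence. I expect the cleanest route to be induction on $|u|$ via the refined, Greene-type invariant that for every threshold $t \in A$ the number of entries of $\emptyset \cdot u$ that are $\geq t$ equals the length of the longest strictly decreasing subsequence of $u$ formed from letters $\geq t$; the single-letter description of the right action (promote the predecessor of $c$ when $c \geq \min$, append $c$ at the bottom when $c < \min$) should reduce the inductive step to a short case check. I note finally that the argument uses only that $N(x)$ is a tableau, so the statement in fact holds for the column-reading word of any semistandard tableau.
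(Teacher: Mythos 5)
Your proof is correct and its skeleton matches the paper's: uniqueness from determinism of $Q'(A)$, and existence reduced to showing that the height of $\gamma\cdot p$ stays equal to $h$ for every prefix $p$ of $w$, with Schensted's theorem applied to the column-reading word $\gamma w$ of $N(x)$ supplying the bound. Where you diverge is in how the height bound is delivered. The paper applies Lemma~\ref{left-right} once to the \emph{full} word: the height of $\gamma\cdot w$ equals that of $\theta(w)\cdot\theta(\gamma)=\theta(\gamma w)\cdot\emptyset$, which is the first column of $P(\theta(\gamma w))$ and hence, by Schensted, has height $\mathrm{lds}(\theta(\gamma w))=\mathrm{lds}(\gamma w)=h$; the prefixes are then handled by sandwiching, since the height is monotone along $\gamma \le \gamma\cdot p \le \gamma\cdot w$. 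You instead bound each prefix directly via the lemma ``$\mathrm{height}(\emptyset\cdot u)=\mathrm{lds}(u)$,'' which you flag as the main obstacle and propose to prove from scratch by a Greene-type induction. That lemma is true and your refined invariant (counting entries $\ge t$ against decreasing subsequences in letters $\ge t$) is strong enough to carry the induction, but the effort is unnecessary: your lemma is an immediate consequence of ingredients already in the paper, namely $\emptyset\cdot u=\theta\bigl(\theta(u)\cdot\emptyset\bigr)$ by Lemma~\ref{left-right}, the fact that $\theta(u)\cdot\emptyset$ is the first column of $P(\theta(u))$, Schensted's theorem, and the observation that $\theta$ preserves the length of the longest strictly decreasing subsequence. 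As written your argument has one incompleteness --- the key lemma is only sketched, not proved --- but since it follows in one line from Lemma~\ref{left-right} and Schensted, this is easily repaired, and your per-prefix bound arguably makes the final step slightly cleaner than the paper's sandwich.
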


Before proving the proposition, we prove a useful lemma, showing that the involution $\theta$ defined in Section \ref{theta} conjugates the left and right actions.

\begin{lemma}\label{left-right}
Let $w\in A^*$  and $\gamma\in \Gamma(A)$. Then
$$\gamma\cdot w=\theta(\theta(w)\cdot\theta(\gamma)).$$
\end{lemma}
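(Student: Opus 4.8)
The plan is to reduce the identity to the case of a single letter, and then to verify that case by directly comparing the two insertion rules.

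First I would exploit the multiplicative structure of both actions together with the fact that $\theta$ is an involutive anti-automorphism. Since the right action satisfies $\gamma\cdot(vc)=(\gamma\cdot v)\cdot c$ and the left action satisfies $(uv)\cdot\delta=u\cdot(v\cdot\delta)$, I would argue by induction on the length of $w$. Writing $w=vc$ with $c\in A$, the inductive hypothesis gives $\gamma\cdot v=\theta(\theta(v)\cdot\theta(\gamma))$, hence $\theta(\gamma\cdot v)=\theta(v)\cdot\theta(\gamma)$ by involutivity. Applying the (yet to be proved) one-letter case to the column $\gamma\cdot v$ and the letter $c$ yields $\gamma\cdot w=(\gamma\cdot v)\cdot c=\theta(\theta(c)\cdot\theta(\gamma\cdot v))=\theta\bigl(\theta(c)\cdot(\theta(v)\cdot\theta(\gamma))\bigr)$. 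Finally, the left-action axiom collapses $\theta(c)\cdot(\theta(v)\cdot\theta(\gamma))$ to $(\theta(c)\theta(v))\cdot\theta(\gamma)$, and $\theta(c)\theta(v)=\theta(vc)=\theta(w)$ because $\theta$ is an anti-automorphism, giving exactly $\theta(\theta(w)\cdot\theta(\gamma))$.

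It then remains to establish the one-letter identity $\gamma\cdot c=\theta(\theta(c)\cdot\theta(\gamma))$. Here I would compare the right-action rule of Section~\ref{Q} with the column-insertion rule of Section~\ref{Schensted-column-insertion}, using that $\theta$ reverses the order of $A$ and so exchanges $\min$ with $\max$ and the relation $\leq$ with $\geq$. The condition $c<\min(\gamma)$ governing the first case of the right action becomes, after applying $\theta$, the condition that $\theta(c)$ exceeds every element of $\theta(\gamma)$, which is precisely the first case of column insertion; in this case both rules simply adjoin the new letter, and applying $\theta$ matches the two outputs. In the remaining case, the letter $b=\max\{x\in\gamma:x\le c\}$ bumped by the right action satisfies $\theta(b)=\min\{x'\in\theta(\gamma):x'\ge\theta(c)\}$, which is exactly the letter bumped when $\theta(c)$ is column-inserted into $\theta(\gamma)$; applying $\theta$ to $(\theta(\gamma)\setminus\theta(b))\cup\theta(c)$ then returns $(\gamma\setminus b)\cup c=\gamma\cdot c$.

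The main obstacle is the bookkeeping in this single-letter verification: one must check that the two case distinctions correspond under $\theta$ and, more delicately, that the bumped letters correspond, i.e.\ that $\theta$ sends the largest element of $\gamma$ that is $\le c$ to the smallest element of $\theta(\gamma)$ that is $\ge\theta(c)$. Once this order-reversal dictionary is set up, the verification is routine and the inductive step above is purely formal. This correspondence of bumped letters is the same phenomenon recorded in Lemma~\ref{left-right-action}, which could alternatively be invoked to shorten the one-letter case.
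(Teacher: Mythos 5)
Your proposal is correct and follows essentially the same route as the paper: reduce to the single-letter case via the multiplicativity of the two actions and the anti-automorphism property of $\theta$, then check the one-letter identity directly from the two insertion rules (a verification the paper compresses to ``follows from the definitions'').
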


\begin{proof} For $w\in A$, the formula follows from the definitions of the left and right actions on columns. To conclude, it is enough
to prove that if the formula holds for $u,v\in A^*$, then also for $w=uv$. We have $\gamma\cdot w=\gamma\cdot
(uv)=(\gamma\cdot u)\cdot v=\theta(\theta(v)\cdot\theta(\gamma\cdot u))=\theta(\theta(v)\cdot(\theta(u)\cdot\theta(\gamma))
=\theta((\theta(v)\theta(u))\cdot\theta(\gamma))=
\theta(\theta(uv)\cdot\theta(\gamma))
=\theta(\theta(w)\cdot\theta(\gamma)).
$
\end{proof}

\begin{proof}[Proof of Proposition \ref{extended-path}] Uniqueness follows from the deterministic property of $Q'(A)$ viewed as an automaton.

To prove the existence of this path, it is enough, by the definition of the right action and of the extended quiver, to show that the
height of $\gamma\cdot p$ is equal to the height $k$ of $\gamma$, for each prefix $p$ of $w$.

Since $\gamma w$ is the column-reading word of $N(x)$, it follows from \eqref{Pw=T} that the $P$-tableau of $\gamma w$ is equal to $N(x)$.
Thus, by Schensted's theorem, the height $k$ of $N(x)$ is equal to the length of the longest strictly
decreasing subsequence of $\gamma w$. Now, the length of the longest strictly decreasing
subsequence of $\theta(\gamma w)$ is $k$, too. Hence, the height of the $P$-tableau of $\theta(\gamma w)$ is $k$; by the definition
of left action, the first column of this tableau is $\theta(\gamma w)\cdot\emptyset$, and this column is equal to $(\theta(w)
\theta(\gamma))
\cdot\emptyset=\theta(w)\cdot(\theta(\gamma)\cdot\emptyset)=\theta(w)\cdot\theta(\gamma)$. Therefore, applying $\theta$ and using Lemma \ref{left-right}, we see that
$\gamma\cdot w$ is of height $k$.

Since the (left and right) action on columns never decreases the height, it follows that for each prefix $p$ of $w$, the
height of $\gamma\cdot p$ is equal to $k$.
\end{proof}

Lemma \ref{left-right} has the following corollary.

\begin{corollary}\label{gamma-w}
Let $w$ be a word and $\gamma $ be a column. Then $\gamma w\equiv_{styl} u (\gamma\cdot w)$ for some word $u$.
\end{corollary}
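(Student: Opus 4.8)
The plan is to reduce the statement to a dual statement about the \emph{left} action, prove that dual statement directly from the theory of $P$-symbols, and then transport it back using the anti-automorphism $\theta$ via Lemma~\ref{left-right}. The reason for passing through $\theta$ is that the left action, unlike the right action, is tied to the $P$-symbol by definition: $v\cdot\delta$ is the first column of $P(v\delta)$, where $\delta$ is read as a decreasing word. Since the corollary is announced as a consequence of Lemma~\ref{left-right}, this is exactly the route the lemma is meant to enable.

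First I would establish the dual claim: for any word $v$ and column $\delta$, one has $v\delta \equiv_{styl} (v\cdot\delta)\,u'$ for some word $u'$. To see this, set $T = P(v\delta)$ and let $c_1 c_2\cdots c_m$ be its column-reading word, with $c_1$ the first column. By the definition of the left action, $c_1 = v\cdot\delta$. On the other hand, by~\eqref{Pw=T} the column-reading word of $T$ satisfies $P(c_1\cdots c_m)=T=P(v\delta)$, so $c_1\cdots c_m \equiv_{plax} v\delta$. Because the plactic congruence is contained in the stylic congruence (the latter being generated by the plactic relations together with the idempotent relations), this yields $v\delta \equiv_{styl} (v\cdot\delta)\,u'$ with $u' = c_2\cdots c_m$.

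Next I would apply $\theta$. Given the word $w$ and column $\gamma$ of the statement, I put $v=\theta(w)$ and $\delta=\theta(\gamma)$ (recall $\theta$ sends columns to columns), invoke the dual claim, and then apply $\theta$ to both sides. Since $\theta$ is an involutive anti-automorphism preserving $\equiv_{styl}$ (Section~\ref{theta}), the left-hand side $v\delta$ becomes $\theta(\delta)\theta(v)=\gamma w$, while the right-hand side $(v\cdot\delta)u'$ becomes $\theta(u')\,\theta(v\cdot\delta)$. Finally, Lemma~\ref{left-right} gives $\theta(v\cdot\delta)=\theta(\theta(w)\cdot\theta(\gamma))=\gamma\cdot w$, so setting $u=\theta(u')$ produces $\gamma w \equiv_{styl} u\,(\gamma\cdot w)$, as required.

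The argument is short and I do not expect a serious obstacle; the one point requiring care is the bookkeeping of order-reversal under $\theta$ — checking that the \emph{prefix} $(v\cdot\delta)$ of the left-action statement becomes the \emph{suffix} $(\gamma\cdot w)$ of the right-action statement, and that the substitution $v=\theta(w),\ \delta=\theta(\gamma)$ is consistent with the form of Lemma~\ref{left-right}. A purely inductive alternative (induction on $|w|$, reducing to the single-letter case $\delta c \equiv_{styl} u''(\delta\cdot c)$) is also available, but its single-letter bumping case must then be checked by hand with the plactic relations, so the $\theta$-dual route is cleaner.
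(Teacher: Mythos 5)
Your proof is correct and follows essentially the same route as the paper's: both establish the left-action statement $v\delta \equiv (v\cdot\delta)u'$ via the first column of $P(v\delta)$ and Schensted's theorem \eqref{Pw=T}, then transport it with the anti-automorphism $\theta$ and Lemma~\ref{left-right}. The only cosmetic difference is that you pass from plactic to stylic equivalence earlier than the paper does.
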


\begin{proof}
We know that $w\cdot \gamma$ is the first column of $P(w\gamma)$. It
follows by column reading and Schensted's theorem that $w\gamma \equiv_{plax} (w\cdot \gamma) u$ for some word $u$.
Applying $\theta$ and using Lemma \ref{left-right}, we find that for each word $w$ and each column $\gamma$, $\gamma w\equiv_{plax} u (\gamma\cdot w)$ for
some word $u$; therefore $\gamma w\equiv_{styl} u (\gamma\cdot w)$.
\end{proof}

Each path
\begin{equation}\label{pathQ'}
\gamma \xrightarrow{w} \gamma\cdot w
\end{equation}
in the extended quiver $Q'(A)$, starting from vertex $\gamma$ and with label $w$, defines a path
\begin{equation}\label{pathQ}
\gamma \xrightarrow{w'} \gamma\cdot w
\end{equation}
in the quiver $Q(A)$, by removing the loops. Precisely, we define the label $w'$ of the associated path in $Q(A)$ recursively as follows: if $w$ is
empty, $w'=w$; otherwise $w=uc$, $u\in A^*, c\in A$, $u'$ is constructed by induction, and then:
\begin{itemize}
    \item[--]
        first case: $w'=u'$ if $\gamma\cdot u=(\gamma \cdot u)\cdot c$ (equivalently
        $c\in \gamma\cdot u$);

    \item[--]
        second case: $w'=u'c$ otherwise.
\end{itemize}
We call this construction {\it loops removal}. 

\begin{lemma}\label{same-action}
With these notations, $\gamma\cdot w'=\gamma\cdot w$.
\end{lemma}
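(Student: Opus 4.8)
The plan is to prove the identity by induction on the length of $w$, following exactly the recursive structure of the loops-removal construction. The base case is immediate: when $w$ is empty, the construction sets $w'=w$, so both $\gamma\cdot w'$ and $\gamma\cdot w$ equal $\gamma$.

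For the inductive step I would write $w=uc$ with $u\in A^*$ and $c\in A$, and assume the inductive hypothesis $\gamma\cdot u'=\gamma\cdot u$. The one fact I would isolate first is the elementary observation that a letter $c$ acts as a loop on a column $\delta$ (that is, $\delta\cdot c=\delta$) if and only if $c\in\delta$; indeed, if $c\in\delta$ then $\max\{x\in\delta:x\leq c\}=c$, so by definition of the right action $\delta\cdot c=(\delta\setminus c)\cup\{c\}=\delta$. This is precisely the equivalence already recorded in the statement of the construction, applied with $\delta=\gamma\cdot u$.

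With this in hand I would split into the two cases of the construction. In the first case $c\in\gamma\cdot u$ and $w'=u'$: the inductive hypothesis gives $\gamma\cdot w'=\gamma\cdot u'=\gamma\cdot u$, while the loop observation gives $\gamma\cdot w=(\gamma\cdot u)\cdot c=\gamma\cdot u$, so the two sides agree. In the second case $c\notin\gamma\cdot u$ and $w'=u'c$: using that the right action is a monoid action together with the inductive hypothesis, $\gamma\cdot w'=(\gamma\cdot u')\cdot c=(\gamma\cdot u)\cdot c=\gamma\cdot(uc)=\gamma\cdot w$. This closes the induction.

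I do not expect any real obstacle here: the content is entirely a bookkeeping induction, and the only point that requires a word of justification—namely that $c$ fixes $\gamma\cdot u$ exactly when $c\in\gamma\cdot u$—is already noted in the definition of loops removal and follows in one line from the formula for the right action.
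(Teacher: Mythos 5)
Your proof is correct and follows essentially the same induction on the recursive loops-removal construction as the paper, handling the two cases identically; the only addition is your explicit one-line verification that $c$ fixes $\gamma\cdot u$ exactly when $c\in\gamma\cdot u$, which the paper leaves implicit.
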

 
\begin{proof} We follow the construction. If $w$ is empty, then $w'$ is empty, and the equality is evident. Suppose now that $w=uc$. In the first case, $\gamma\cdot w'=\gamma\cdot  u'=\gamma\cdot u$ (by induction) $=\gamma\cdot(uc)=\gamma\cdot w$. In the second case, $\gamma\cdot w'=\gamma\cdot(u'c)=(\gamma\cdot u')\cdot c=(\gamma\cdot u)\cdot c$ (by induction) $=\gamma\cdot (uc)=\gamma\cdot w$.
\end{proof}

\begin{lemma}\label{gammaw} With these notations, $\gamma w'\equiv_{styl}\gamma w$.
\end{lemma}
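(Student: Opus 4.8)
The plan is to induct on the length of $w$, following exactly the recursive shape of the loop-removal construction. When $w$ is empty we have $w'=w$ and the claim $\gamma\equiv_{styl}\gamma$ is trivial. For the inductive step I write $w=uc$ with $c\in A$, let $u'$ be the word associated with $u$ by loop removal, and take as induction hypothesis the already-established congruence $\gamma u'\equiv_{styl}\gamma u$. The construction then splits into the same two cases used to define $w'$.

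The second case ($w'=u'c$) is immediate: since $\equiv_{styl}$ is a congruence, right-multiplying the induction hypothesis $\gamma u'\equiv_{styl}\gamma u$ by the letter $c$ yields $\gamma w'=\gamma u'c\equiv_{styl}\gamma uc=\gamma w$. Hence all the content sits in the first case, where $c\in\gamma\cdot u$ and $w'=u'$. There I must show that appending $c$ leaves the stylic class unchanged, namely $\gamma uc\equiv_{styl}\gamma u$; once this is in hand, the induction hypothesis closes the argument through $\gamma w=\gamma uc\equiv_{styl}\gamma u\equiv_{styl}\gamma u'=\gamma w'$.

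To prove $\gamma uc\equiv_{styl}\gamma u$, I would first invoke Corollary~\ref{gamma-w} to rewrite $\gamma u\equiv_{styl}v\,(\gamma\cdot u)$ for some word $v$, trading the arbitrary prefix word $\gamma u$ for the explicit column $\gamma\cdot u$. It then suffices to check $(\gamma\cdot u)\,c\equiv_{styl}\gamma\cdot u$, and this is exactly where the loop condition $c\in\gamma\cdot u$ is used: writing the decreasing word $\gamma\cdot u$ as $\delta_1 c\,\delta_2$ with every letter of $\delta_2$ strictly below $c$, the dual of Lemma~\ref{axa}(i) (together with $c^2=c$) gives $c\,\delta_2\,c\equiv_{styl}c\,\delta_2$, whence $(\gamma\cdot u)c=\delta_1(c\,\delta_2\,c)\equiv_{styl}\delta_1 c\,\delta_2=\gamma\cdot u$. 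Combining, $\gamma uc\equiv_{styl}v(\gamma\cdot u)c\equiv_{styl}v(\gamma\cdot u)\equiv_{styl}\gamma u$, as needed.

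I expect the first case to be the only genuine obstacle, and within it the subtle point is the detour through Corollary~\ref{gamma-w}: the congruence $\gamma uc\equiv_{styl}\gamma u$ is \emph{false} for bare prefix words in general, so the proof must truly exploit that $c$ already lies in the stabilised column $\gamma\cdot u$ rather than rely on any purely formal cancellation. Everything else is routine bookkeeping with the congruence and the already-proved Lemma~\ref{same-action}.
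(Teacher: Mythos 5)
Your proof is correct and follows essentially the same route as the paper: induction on the loop-removal recursion, with the loop case reduced via Corollary~\ref{gamma-w} to the identity $(\gamma\cdot u)c\equiv_{styl}\gamma\cdot u$, which you prove from the dual of Lemma~\ref{axa}(i) exactly as the paper does. Your handling of the non-loop case is in fact slightly leaner, since you invoke only the congruence property of $\equiv_{styl}$ applied to the induction hypothesis, whereas the paper also routes both sides through $vb(\gamma\cdot w)$ using the column presentation of the plactic monoid, a step that is not needed for the lemma itself.
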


\begin{proof} 1. Let $\delta\xrightarrow{c} \delta\cdot c$ be a 
an edge in the extended quiver $Q'(A)$. Then $c\geq \min (\delta)$. Next, $\delta c\equiv_{styl}b (\delta\cdot c)$, with $b\in
A$: this equality holds indeed plactically, as a particular case of the presentation by columns of the plactic monoid due to \cite{BCCL,
CGM}, after applying $\theta$ and Lemma \ref{left-right} (see also \cite{AR} Proposition 12.3 (v)).

Suppose that moreover $c\in\delta$, equivalently $\delta\cdot c=\delta$. Then $\delta=\delta_1c\delta_2$ with each letter in $\delta_2$ 
smaller than $c$. Then $\delta_2 c\equiv_{styl} \delta_2$, by the dual form of Lemma \ref{axa} (i), from which follows $\delta c\equiv_{styl} \delta$.

%
%

2. We prove the lemma by following the recursive construction of $w'$. If $w$ is empty, it is evident. Suppose now that $w=uc$
and assume by induction that $\gamma u\equiv_{styl}\gamma u'$, where $u'$ is obtained from $u$ by loops removal. By Corollary \ref{gamma-w}, we have $\gamma u\equiv_{styl}v (\gamma\cdot u)$ for some word $v$. By
1 and 2, we have $(\gamma\cdot u) c\equiv_{styl} b(\gamma\cdot w)$,
with $b=1$ if $\gamma\cdot u=\gamma \cdot(uc)$, and $b\in A$ otherwise.

In the first case, we have  $\gamma\cdot u=\gamma\cdot (uc)=\gamma \cdot w$, $w'=u'$, $b=1$.
Then $\gamma w=\gamma uc \equiv_{styl} v(\gamma\cdot u)c \equiv_{styl} v(\gamma\cdot w)$, and $\gamma w'=\gamma u'
\equiv_{styl}  \gamma u \equiv_{styl}
v(\gamma\cdot u)=v(\gamma \cdot w)$.

In the second case, we have $\gamma\cdot w=\gamma\cdot (uc) \neq \gamma\cdot u$, $w'=u'c$, $b\in A$. Then $\gamma w=\gamma uc \equiv_{styl}
v(\gamma\cdot u)c \equiv_{styl} vb(\gamma\cdot w)$, and $\gamma w'=\gamma u'c \equiv_{styl} \gamma uc
\equiv_{styl} v(\gamma\cdot u)c
\equiv_{styl} vb(\gamma\cdot w)$.
\end{proof}

\begin{corollary}\label{path-quiver} The image under $\varphi$ of the path \eqref{pathQ} in $Q(A)$, obtained from the path \eqref{pathQ'} in $Q'(A)$ by loops removal, is equal to $e_\gamma w$.
\end{corollary}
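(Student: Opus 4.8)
The plan is to combine Theorem~\ref{image} with Lemma~\ref{gammaw}. The path~\eqref{pathQ} in $Q(A)$ runs from $\gamma$ to $\gamma\cdot w$; indeed its endpoint is $\gamma\cdot w'=\gamma\cdot w$ by Lemma~\ref{same-action}, and it carries the label $w'$ obtained from $w$ by loops removal. Applying Theorem~\ref{image} directly to this path, I would first record that its image under $\varphi$ is $e_\gamma w'$.

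It then remains to establish the algebraic identity $e_\gamma w'=e_\gamma w$ in $\Z\Styl(A)$. For this I would invoke \eqref{egammagamma}, writing $e_\gamma=e_\gamma\gamma$, so that $e_\gamma w'=e_\gamma\gamma w'$ and $e_\gamma w=e_\gamma\gamma w$. The crucial input is Lemma~\ref{gammaw}, which gives $\gamma w'\equiv_{styl}\gamma w$; since stylic-equivalent words have the same image in $\Styl(A)$, the words $\gamma w'$ and $\gamma w$ represent the same element of $\Z\Styl(A)$. Multiplying this equality on the left by $e_\gamma$ then yields $e_\gamma\gamma w'=e_\gamma\gamma w$, that is, $e_\gamma w'=e_\gamma w$.

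Chaining these two observations gives $\varphi(\text{of the path}~\eqref{pathQ})=e_\gamma w'=e_\gamma w$, as desired. I expect no real obstacle at this stage: all the substantive combinatorics has already been absorbed into Lemma~\ref{gammaw}, and the only subtlety worth flagging is that the comparison of $w$ and $w'$ cannot be carried out by a single appeal to Corollary~\ref{same-image}, because the label $w$ need not spell a path in $Q(A)$ (it is only guaranteed to spell one in the extended quiver $Q'(A)$). The argument therefore routes through Theorem~\ref{image} applied to $w'$ alone, and then passes from the stylic congruence to genuine equality in the monoid algebra by means of the idempotence relation~\eqref{egammagamma}.
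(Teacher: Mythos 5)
Your proposal is correct and follows essentially the same route as the paper: apply Theorem~\ref{image} to the loop-removed path to get $e_\gamma w'$, then use \eqref{egammagamma} together with Lemma~\ref{gammaw} to conclude $e_\gamma w' = e_\gamma \gamma w' = e_\gamma \gamma w = e_\gamma w$. Your side remark that Corollary~\ref{same-image} cannot be invoked directly (since $w$ need not label a path in $Q(A)$) is a correct and worthwhile clarification.
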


\begin{proof}
Suppose that $w$ is the label of a path in $Q'(A)$ starting form $\gamma$; define $w'$ by loops removal. The image under $\varphi$ of our path of $Q(A)$ is by Theorem \ref{image} equal to $y=e_\gamma w'$. By (\ref{egammagamma}), we have $y=e_\gamma \gamma w'$. Hence by Lemma \ref{gammaw}, the corollary follows.
\end{proof}

\subsection{The surjectivity of the quiver map}
\label{surjectivity-quiver-map}

Let $x \in \Styl(A)$ and denote by $\eta(x) w_x$ the column-reading word of the
$N$-tableau $N(x)$ of $x$, with $\eta(x)$ being the first column of $N(x)$.
Recall from Proposition~\ref{extended-path}, that we have constructed a path, in the extended quiver $Q'(A)$, starting form $\eta(x)$
and with label $w_x$. From this path in $Q'(A)$, we obtain by loops removal in Section \ref{extended-quiver}, 
a path in $Q(A)$ starting form $\eta(x)$
and with label $w'_x$; we call such
a path an {\it $N$-path}.

\begin{proposition}\label{basis} The set $\{e_{\eta(x)}w_x : x\in\Styl(A)\}$ is a basis of $\Z\Styl(A)$.
\end{proposition}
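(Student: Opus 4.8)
The plan is to show that the transition matrix from the set $\{e_{\eta(x)}w_x : x\in\Styl(A)\}$ to the monoid basis $\{x : x\in\Styl(A)\}$ of $\Z\Styl(A)$ is unitriangular with respect to the $\J$-order, and hence invertible over $\Z$; since there are $|\Styl(A)|$ of these elements, this will prove the claim at once.

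First I would put each element in a convenient form. By Proposition~\ref{properties}, the column-reading word $\eta(x)w_x$ of $N(x)$ satisfies $x=\mu(\eta(x)w_x)$, so $[\eta(x)]\,\mu(w_x)=x$ in $\Z\Styl(A)$, where $[\eta(x)]=\prod^{\searrow}_{a\in\eta(x)}a$ is the image of the decreasing word $\eta(x)$ (the second product in \eqref{egamma}). Since $e_{\eta(x)}=\prod^{\nearrow}_{a\notin\eta(x)}(1-a)\,[\eta(x)]$, multiplying on the right by $\mu(w_x)$ gives
$$
e_{\eta(x)}w_x \;=\; \prod^{\nearrow}_{a\notin\eta(x)}(1-a)\;x \;=\; \sum_{S\subseteq A\setminus\eta(x)}(-1)^{|S|}\,\mu(u_S)\,x,
$$
where $u_S$ denotes the word listing the elements of $S$ in increasing order. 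The term $S=\emptyset$ contributes $x$, while every other summand is a monoid element $\mu(u_S)x\in\Styl(A)x\Styl(A)$, hence satisfies $\mu(u_S)x\leq_{\J}x$.

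The crux is to show these remaining terms are \emph{strictly} below $x$, which I would do by comparing first columns. By Proposition~\ref{properties}, the first column of a monoid element $y$ is $y\cdot\emptyset$; since the left action is a monoid action and $x\cdot\emptyset=\eta(x)$, the first column of $\mu(u_S)x$ equals $\mu(u_S)\cdot(x\cdot\emptyset)=u_S\cdot\eta(x)$. Writing $u_S=s_1\cdots s_k$ with $s_1=\min S$, the left action $s_1\cdot\big(s_2\cdot(\cdots(s_k\cdot\eta(x)))\big)$ applies column insertion of $s_1$ \emph{last}, and an inserted letter always belongs to the resulting column; hence $s_1\in u_S\cdot\eta(x)$. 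As $s_1\notin\eta(x)$ (because $S\subseteq A\setminus\eta(x)$), we conclude $u_S\cdot\eta(x)\neq\eta(x)$ whenever $S\neq\emptyset$, so $\mu(u_S)x$ and $x$ have distinct first columns and are distinct monoid elements. Since $\Styl(A)$ is $\J$-trivial, $\leq_{\J}$ is a partial order, so $\mu(u_S)x\leq_{\J}x$ together with $\mu(u_S)x\neq x$ yields $\mu(u_S)x<_{\J}x$. This short combinatorial observation—that the last-inserted letter $\min S$ survives and genuinely changes the first column—is where I expect the only real content to lie; the rest is bookkeeping.

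Collecting terms, $e_{\eta(x)}w_x=x+\sum_{y<_{\J}x}c_{x,y}\,y$ with $c_{x,y}\in\Z$. Ordering $\Styl(A)$ by a linear extension of the $\J$-order, the matrix expressing $\{e_{\eta(x)}w_x\}$ in the monoid basis is lower unitriangular; its determinant is $1$, so the set is a $\Z$-basis of $\Z\Styl(A)$. Finally, applying the same computation directly to $e_\gamma=\sum_{S\subseteq A\setminus\gamma}(-1)^{|S|}\mu(u_S)\mu(\gamma)$ (whose $S=\emptyset$ term is $\mu(\gamma)$, with first column $\gamma$) shows that $\mu(\gamma)$ appears with coefficient $1$ and is distinct from every other monoid element occurring; hence $e_\gamma\neq 0$, which supplies the fact postponed in the proof of Theorem~\ref{system}.
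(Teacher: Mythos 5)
Your proof is correct and follows essentially the same route as the paper: expand $e_{\eta(x)}w_x=\prod^{\nearrow}_{a\notin\eta(x)}(1-a)\,x$ and show the resulting expression is $x$ plus terms strictly below $x$ in the $\J$-order, whence unitriangularity. The only (minor) difference is in establishing strictness: the paper invokes Proposition~\ref{properties} (the left-fixers of $x$ are exactly the first column of $N(x)$) for a single letter $a\notin\eta(x)$, whereas you re-derive the same fact by tracking the first column $u_S\cdot\eta(x)$ directly through column insertion --- both arguments are sound.
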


\begin{proof} Recall from \cite{AR} that $\Styl(A)$ is a $\J$-trivial monoid, and that it has therefore the $\J$-order $\leq_\J$. One has $x\leq_\J y$ if and only if for some $u,v$, $x=uyv$ (all these elements are in $\Styl(A)$).

Let $x\in\Styl(A)$, with $\eta(x)=\gamma$; then by (\ref{xwN}), $x=\gamma w_x$ in $\Z\Styl(A)$, and by (\ref{egamma}),
$$e_{\eta(x)}w_x=\prod^{\nearrow}_{a\notin \gamma}(1-a)x.$$
Let $a\notin \gamma$; then $ax\leq_\J x$; moreover by Proposition \ref{properties}, and since $\gamma$ is the first column of $N(x)$, $ax\neq x$
and therefore $ax<_\J x$. It follows from the displayed formula that $e_{\eta(x)}w_x$ is equal to $x$ plus a linear combination of elements strictly smaller than $x $ in the $\J$-order.
Hence, by triangularity, the elements $e_{\eta(x)}w_x$, $x\in\Styl(A)$, form a basis of $\Z\Styl(A)$.
\end{proof}

\begin{corollary}
    \label{quiver-map-is-surjective}
    The quiver map $\varphi$ is surjective.
\end{corollary}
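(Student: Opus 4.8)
The plan is to exhibit, for each member of a known basis of $\Z\Styl(A)$, an explicit preimage under $\varphi$, so that the image of $\varphi$ contains a spanning set. The natural basis to use is the one just produced in Proposition~\ref{basis}, namely $\{e_{\eta(x)}w_x : x \in \Styl(A)\}$. Since $\varphi$ is a $\Z$-algebra homomorphism (hence in particular $\Z$-linear), it suffices to show that each basis element $e_{\eta(x)}w_x$ lies in the image.

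The key input is Corollary~\ref{path-quiver}. For a fixed $x \in \Styl(A)$, write $\gamma = \eta(x)$ for the first column of $N(x)$, so that $\gamma w_x$ is the column-reading word of $N(x)$. By Proposition~\ref{extended-path} there is a (unique) path in the extended quiver $Q'(A)$ starting at $\gamma$ with label $w_x$; applying loops removal as in \S\ref{extended-quiver} yields the associated $N$-path in $Q(A)$, which is a genuine path of $Q(A)$ on which $\varphi$ is defined. Corollary~\ref{path-quiver} then says precisely that the image of this $N$-path under $\varphi$ equals $e_\gamma w_x = e_{\eta(x)}w_x$. Hence every element of the basis of Proposition~\ref{basis} is in the image of $\varphi$.

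Combining the two steps, the image of $\varphi$ contains a $\Z$-basis of $\Z\Styl(A)$, and therefore $\varphi$ is surjective. I do not expect any genuine obstacle here: all the substantive work has already been carried out in establishing Proposition~\ref{basis} (the triangularity argument producing the basis) and Corollary~\ref{path-quiver} (identifying the $\varphi$-image of an $N$-path with $e_\gamma w$). The only point requiring a moment's care is the bookkeeping that the loops-removed path is a legitimate path in $Q(A)$, which is exactly what the construction of Section~\ref{extended-quiver} and Proposition~\ref{extended-path} guarantee, so the corollary follows immediately.
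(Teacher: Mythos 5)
Your proof is correct and follows exactly the paper's own argument: the basis $\{e_{\eta(x)}w_x\}$ from Proposition~\ref{basis} consists of images under $\varphi$ of the $N$-paths, by Corollary~\ref{path-quiver}, hence $\varphi$ is surjective. No discrepancies to note.
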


\begin{proof} The element $e_{\eta(x)}w_x$ is the image under $\varphi$ of the path constructed in Corollary~\ref{path-quiver}. Hence, by Proposition~\ref{basis}, $\varphi$ is surjective.
\end{proof}

\begin{corollary}\label{lin-ind}
    The $N$-paths are linearly independent modulo $\ker(\varphi)$.
\end{corollary}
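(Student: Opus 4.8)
The plan is to deduce the statement directly from Corollary~\ref{path-quiver} and Proposition~\ref{basis}, using the elementary principle that a family of elements whose images under a linear map are linearly independent must itself be linearly independent modulo the kernel of that map.

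First I would fix notation: for each $x \in \Styl(A)$, let $\pi_x$ denote the $N$-path in $Q(A)$ associated with $x$, namely the path starting from $\eta(x)$ with label $w'_x$ obtained by loops removal from the path in $Q'(A)$ with label $w_x$. By Corollary~\ref{path-quiver}, its image under the quiver map is
$$
\varphi(\pi_x) = e_{\eta(x)} w_x .
$$

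Next I would invoke Proposition~\ref{basis}, which asserts that the family $\{e_{\eta(x)} w_x : x \in \Styl(A)\}$ is a basis of $\Z\Styl(A)$; in particular these elements are $\Z$-linearly independent. Now suppose some finite $\Z$-linear combination $\sum_x \lambda_x \pi_x$ lies in $\ker(\varphi)$. Applying $\varphi$ and using its $\Z$-linearity gives $\sum_x \lambda_x e_{\eta(x)} w_x = 0$, and linear independence of the images forces every $\lambda_x = 0$. Hence the $N$-paths are linearly independent modulo $\ker(\varphi)$, which is the claim.

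I would also remark that the same computation shows that the assignment $x \mapsto \pi_x$ is injective, so that the $N$-paths genuinely form a family in bijection with $\Styl(A)$: if two such paths coincided, their common image would have to equal two distinct basis elements, a contradiction. There is essentially no obstacle here, as the substance of the argument resides in the earlier results, namely Corollary~\ref{path-quiver} (which identifies the image of an $N$-path) and Proposition~\ref{basis} (whose triangularity with respect to the $\J$-order yields the basis). The only point requiring minor care is to treat the $N$-paths as indexed by $\Styl(A)$ and to check that this indexing is faithful, so that ``the $N$-paths'' are precisely the family whose images appear in the basis.
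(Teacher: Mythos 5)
Your argument is correct and is exactly the paper's (implicit) reasoning: Corollary~\ref{path-quiver} identifies the image of the $N$-path for $x$ as $e_{\eta(x)}w_x$, and Proposition~\ref{basis} makes these images a basis, so the $N$-paths are linearly independent modulo $\ker(\varphi)$. The remark on faithfulness of the indexing $x \mapsto \pi_x$ is a reasonable extra precaution but not needed beyond what the paper intends.
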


\subsection{The kernel of the quiver map}
\label{kernel-quiver-map}

The following result shows that $\ker(\varphi)$ is completely described by Corollary~\ref{same-image}.

\begin{proposition}\label{span} The kernel of $\varphi$ is spanned by the elements which are differences of two paths in $Q(A)$ starting from the same vertex $\gamma$ and having labels $u,v$ satisfying $\gamma u\equiv_{styl} \gamma v$.
\end{proposition}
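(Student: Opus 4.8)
The plan is to prove the two inclusions separately. Writing $K$ for the $\Z$-span of the stated differences, the inclusion $K\subseteq\ker(\varphi)$ is immediate: if $p,q$ are paths from $\gamma$ with labels $u,v$ and $\gamma u\equiv_{styl}\gamma v$, then $\varphi(p)=\varphi(q)$ by Corollary~\ref{same-image}, so $p-q\in\ker(\varphi)$. All the content is therefore in the reverse inclusion $\ker(\varphi)\subseteq K$, and the strategy is to show that the $N$-paths form a basis of $\Z Q(A)/K$ and then to compare with the basis of $\Z\Styl(A)$ coming from Proposition~\ref{basis}.

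The heart of the argument is the following claim, which I expect to be the main obstacle: for every path $\gamma\xrightarrow{u}\gamma\cdot u$ in $Q(A)$, the first column $\eta(x)$ of $N(x)$, with $x=\mu(\gamma u)$, is exactly $\gamma$. This is what makes the reduction possible, since the relations spanning $K$ all preserve the source vertex, so a path from $\gamma$ can only reduce to $N$-paths from $\gamma$. Granting the claim, fix such a path $p$ and let $p_x$ be the $N$-path attached to $x$, which then also starts at $\gamma$. Since $\mu(\gamma u)=x=\mu(\gamma w_x)$ by~\eqref{xwN} and $\gamma w_x\equiv_{styl}\gamma w'_x$ by Lemma~\ref{gammaw}, we get $\gamma u\equiv_{styl}\gamma w'_x$; as $u$ and $w'_x$ both label paths from $\gamma$, this yields $p-p_x\in K$. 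Thus modulo $K$ every path reduces to an $N$-path, so the $N$-paths span $\Z Q(A)/K$. By Corollary~\ref{lin-ind} they are linearly independent modulo $\ker(\varphi)$, hence modulo the smaller submodule $K$, so they form a basis of $\Z Q(A)/K$. The natural surjection $\Z Q(A)/K\to\Z Q(A)/\ker(\varphi)$ carries this basis to the elements $e_{\eta(x)}w_x$ by Corollary~\ref{path-quiver}, which form a basis of $\Z\Styl(A)$ by Proposition~\ref{basis}; a surjection of free modules sending a basis to a linearly independent family is injective, hence an isomorphism, forcing $\ker(\varphi)=K$.

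It remains to sketch the claim. On one hand $\gamma\subseteq\eta(x)$: for $a\in\gamma$ we have $a\gamma\equiv_{styl}\gamma$, by the second item of Proposition~\ref{properties} applied to the column $\gamma$ (whose $N$-tableau has first column $\gamma$), so $ax=x$ and $a$ lies in the first column $\eta(x)$ of $N(x)$. On the other hand $\eta(x)$ is the first column of $P(\gamma u)$ by the first item of Proposition~\ref{properties}, so by Schensted's theorem its height is the length of the longest strictly decreasing subsequence of $\gamma u$. Using Lemma~\ref{left-right} together with the fact that $\theta$ preserves longest strictly decreasing subsequences, this length equals the height of $\gamma\cdot u$; and since every edge along the path is frank, the height of $\gamma\cdot u$ equals that of $\gamma$. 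Therefore $|\eta(x)|=|\gamma|$, which with $\gamma\subseteq\eta(x)$ gives $\eta(x)=\gamma$. Controlling this height --- equivalently, ruling out a strictly decreasing subsequence of $\gamma u$ longer than $\gamma$ --- is the delicate point, and is precisely where frankness of the path enters.
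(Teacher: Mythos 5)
Your proposal is correct and follows essentially the same route as the paper: both inclusions, the key claim that the source vertex $\gamma$ of a path equals $\eta(\mu(\gamma u))$ (containment via left-fixed letters, equality via the height being preserved by frankness and transported through $\theta$ and Lemma~\ref{left-right}), reduction of an arbitrary path to the $N$-path with the same source modulo the span, and then linear independence of $N$-paths from Corollary~\ref{lin-ind}. The only cosmetic difference is that where you invoke Schensted's theorem and the $\theta$-invariance of longest strictly decreasing subsequences to equate the heights of $N(x)$ and $\theta(u)\cdot\theta(\gamma)$, the paper cites \cite[Theorem~9.1]{AR} (that $N(x)$ and $N(\theta(x))$ have the same height); your variant mirrors the argument the paper itself uses in Proposition~\ref{extended-path} and is equally valid.
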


\begin{proof} Denote by $H$ the subspace described in the statement. We know by Corollary~\ref{same-image} that $H$ is a subspace of $\ker(\varphi)$.

Consider a path starting from $\gamma$ and with label $u$. Let $x=\gamma u$.

1. We show that the first column of $N(x)$ is $\gamma$.

If $a\in \gamma$, by Lemma~\ref{axa} (i), we have $a\gamma=\gamma$ in $\Styl(A)$; hence, $a$ is in the first column of $N(x)$ (by Proposition~\ref{properties}), and this column therefore contains $\gamma$.

Moreover, by definition of the quiver and of paths, the height of $\gamma\cdot u$ is the same as the height $h$ of $\gamma$; thus the height of $\theta(\gamma\cdot u)$ is $h$, and so is that of $\theta(u)\cdot \theta(\gamma)$ by Lemma~\ref{left-right}; but this column is the first column of $N(\theta(u)\theta(\gamma))=N(\theta(\gamma u))=N(\theta(x))$. By Theorem 9.1 in \cite{AR}, $N(x)$ and $N(\theta(x))$ have the same height; the height of $N(x)$ is therefore $h$. It follows that its first column is $\gamma$.

2. Consider now the path \eqref{pathQ'} of $Q'(A)$ constructed in Proposition~\ref{extended-path}, and the associated path \eqref{pathQ} in $Q(A)$, obtained by removing the loops: it starts at $\gamma$ and has $w'$ as label.

We know that $\gamma u=x\equiv_{styl} \gamma w$, by \eqref{xwN} since the latter word is the column-reading word of $N(x)$. Hence by Lemma~\ref{gammaw}, $\gamma u\equiv_{styl} \gamma w'$ and therefore the two paths of $Q(A)$ starting at $\gamma$ and with labels $u$ and $w'$ have the same image under $\varphi$, by Corollary~\ref{same-image}.

It follows that each element in the quiver algebra is congruent modulo $H$ to a linear combination of $N$-paths. Since by
Corollary~\ref{lin-ind} these $N$-paths are linearly independant modulo $\ker(\varphi)$, it follows that $\ker(\varphi)\subset H$.
\end{proof}

\subsection{The quiver of the stylic algebra}
\label{quiver-of-stylic-algebra}

Now, let $\K$ be a field of characteristic $0$. We apply a theorem of Auslander, Reiten and Smal\o {} \cite{AOR}, in order to prove that $Q(A)$ is the quiver of $\K\Styl(A)$.

\begin{theorem}\label{quiver} The quiver of the stylic algebra over $\K$ is $Q(A)$.
\end{theorem}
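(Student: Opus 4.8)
The plan is to deduce Theorem~\ref{quiver} from the theorem of Auslander--Reiten--Smal\o{} \cite{AOR}: if $B$ is a basic, split finite-dimensional $\K$-algebra and $\varphi\colon \K Q\to B$ is a surjective algebra map carrying the trivial paths to a complete system of primitive orthogonal idempotents and with $\ker\varphi$ an \emph{admissible} ideal of $\K Q$, then $Q$ is the quiver of $B$. Here $B=\K\Styl(A)$, $Q=Q(A)$, and $\varphi$ is the map of \S\ref{quiver-map}. The hypotheses on idempotents are supplied by Theorem~\ref{system}, surjectivity by Corollary~\ref{quiver-map-is-surjective}, and the computation $e_\gamma\,\K\Styl(A)\,e_\gamma=\K e_\gamma$ from the proof of Theorem~\ref{system} shows that $B$ is basic and split, with $B/\mathrm{rad}(B)\cong \K^{\Gamma(A)}$ and $\mathrm{rad}(B)=\bigoplus_{\gamma\neq\delta}e_\gamma B e_\delta$. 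Everything therefore reduces to proving that $\ker\varphi$ is admissible, i.e. $R^{m}\subseteq\ker\varphi\subseteq R^{2}$ for some $m$, where $R$ is the arrow ideal of $\K Q(A)$.

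The key device is the basis $\{e_{\eta(x)}w_x : x\in\Styl(A)\}$ of Proposition~\ref{basis}; write $b_x:=e_{\eta(x)}w_x$. By Theorem~\ref{image} and \eqref{egammagamma}, a path in $Q(A)$ from $\gamma$ with label $w$ is sent by $\varphi$ to $e_\gamma w=e_\gamma\,\mu(\gamma w)$, and (as in step~1 of the proof of Proposition~\ref{span}) the first column of $N(\mu(\gamma w))$ is $\gamma$; hence $\varphi$ carries this path to the \emph{single} basis vector $b_x$ with $x=\mu(\gamma w)$. Because each arrow is frank, so that $\gamma\neq\gamma\cdot c$, its image lies in $\mathrm{rad}(B)$, and since $\varphi$ is surjective and carries $R$ onto $\mathrm{rad}(B)$ one gets $\varphi(R^{k})=\mathrm{rad}(B)^{k}$ for all $k$. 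Consequently $\mathrm{rad}(B)^{k}$ is exactly the coordinate subspace $\mathrm{span}\{b_x : x\text{ is realised by a path of length}\ge k\text{ in }Q(A)\}$ of the basis $\{b_x\}$.

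The heart of the argument, and the step I expect to be the main obstacle, is to show that the length of a path in $Q(A)$ realising a given $x$ is an invariant of $x$. Concretely, I would prove that each frank step increases the number of cells of the $N$-tableau by exactly one, so that every path from $\eta(x)$ with $\mu(\eta(x)\,w)=x$ has length $L(x):=\lvert N(x)\rvert-\lvert\eta(x)\rvert$, the number of cells of $N(x)$ outside its first column. This is the only place where the internal structure of the $N$-insertion of \cite{AR} is needed, and the delicate point is precisely that frankness forces the cell count to grow by one and never by zero or two. Granting it, $\mathrm{rad}(B)^{k}=\mathrm{span}\{b_x : L(x)\ge k\}$ is a filtration by nested coordinate subspaces.

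Admissibility then follows formally. Since $L$ is bounded (the $N$-tableaux are finite), $\mathrm{rad}(B)^{m}=0$ for large $m$, giving $R^{m}\subseteq\ker\varphi$. For the lower bound, write $z=z_0+z_1+z_2\in\ker\varphi$ with $z_0$ a combination of trivial paths, $z_1$ of arrows, and $z_2\in R^{2}$. Vertices map to the vectors $b_{\mu(\gamma)}$ with $L=0$, and arrows to the vectors $b_{\mu(\gamma c)}$ with $L=1$, all distinct (an arrow is recovered from $\mu(\gamma c)$ via $\eta$ and the endpoint $\gamma\cdot c$, and there is at most one frank edge between two vertices), whereas $\varphi(z_2)\in\mathrm{rad}(B)^{2}=\mathrm{span}\{b_x:L(x)\ge 2\}$. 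Comparing coordinates in the basis $\{b_x\}$ forces $\varphi(z_0)=\varphi(z_1)=0$ and hence $z_0=z_1=0$, so $\ker\varphi\subseteq R^{2}$. This establishes that $\ker\varphi$ is admissible, and the theorem follows from \cite{AOR}.
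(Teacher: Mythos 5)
Your overall strategy coincides with the paper's: invoke the Auslander--Reiten--Smal\o{} criterion and reduce everything to the admissibility of $\ker\varphi$, where the inclusion $F^m\subseteq\ker\varphi$ is immediate (the paper notes, more directly than your detour through $\mathrm{rad}(B)^m$, that $Q(A)$ has no closed paths, so $F^m=0$ for large $m$). Your identification of the image of a path from $\gamma$ with label $w$ as the single basis vector $b_{\mu(\gamma w)}$ is correct: it follows from Theorem~\ref{image}, \eqref{egammagamma}, and the fact that $\eta(\mu(\gamma w))=\gamma$ (step~1 of the proof of Proposition~\ref{span}), and it lets you argue coordinatewise in the basis of Proposition~\ref{basis} without ever invoking the description of $\ker\varphi$ in Proposition~\ref{span}.

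The gap is exactly where you predicted it. The claim that every frank edge increases the number of cells of the $N$-tableau by exactly one, so that all paths in $Q(A)$ realising a given $x$ have the common length $L(x)=\lvert N(x)\rvert-\lvert\eta(x)\rvert$, is asserted (``I would prove that\ldots'') but never proved, and both the grading $\mathrm{rad}(B)^k=\mathrm{span}\{b_x:L(x)\ge k\}$ and the disjointness of the supports of $\varphi(z_1)$ and $\varphi(z_2)$ rest on it. It is a genuinely nontrivial statement about $N$-insertion; note moreover that even if path length is an invariant of $x$, identifying it with $\lvert N(x)\rvert-\lvert\eta(x)\rvert$ would additionally require that the path of Proposition~\ref{extended-path} attached to the column-reading word of $N(x)$ acquires no loops in $Q'(A)$, another unproved assertion. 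What your argument actually needs is far weaker: only that an $x$ realised by a single arrow cannot be realised by a path of length at least $2$ (the length-$0$ separations follow from the alphabet, since frank edges add letters). The paper gets precisely this from two elementary invariants: the alphabet of $\gamma w$, preserved by $\equiv_{styl}$, forces a competing label $v$ of length $\ge 2$ to start with the same letter $b$ as the arrow; and the weight of the terminal column, which strictly increases along frank edges and is determined by the monoid element via Lemma~\ref{gammau}, then gives $\gamma\cdot v\neq\gamma\cdot b$, a contradiction. To complete your proof, either supply an argument for the cell-counting claim (which, if true, is a stronger structural statement than the paper establishes --- a grading of $\K\Styl(A)$ by path length) or substitute the alphabet-and-weight argument at this one point.
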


We first prove the following useful lemma.
\begin{lemma}\label{gammau} Let $\gamma$ be a column, and $u,v\in A^*$. If $\gamma u\equiv_{styl} \gamma v$, then $\gamma \cdot u =\gamma \cdot v$.
\end{lemma}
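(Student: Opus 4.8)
The plan is to transport the statement from the right action to the left action by means of the involution $\theta$, where it becomes an immediate consequence of the very definition of the stylic congruence. Recall that $\equiv_{styl}$ is defined by $u \equiv_{styl} v$ if and only if $u \cdot \delta = v \cdot \delta$ for every column $\delta$, where $\cdot$ here denotes the \emph{left} action. In particular, stylic-equal words act identically on the empty column, so evaluating at $\delta = \emptyset$ will be the key specialisation. The target statement, however, concerns the \emph{right} action, and the bridge between the two is Lemma~\ref{left-right}, which expresses $\gamma \cdot w = \theta(\theta(w) \cdot \theta(\gamma))$.

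First I would apply $\theta$ to the hypothesis. Since $\theta$ is an anti-automorphism of $\Styl(A)$ (see \S\ref{theta}), the relation $\gamma u \equiv_{styl} \gamma v$ yields $\theta(u)\theta(\gamma) \equiv_{styl} \theta(v)\theta(\gamma)$, with the order of the factors reversed. Next I would feed this into the definition of $\equiv_{styl}$ and specialise to the empty column, obtaining
$$\big(\theta(u)\theta(\gamma)\big) \cdot \emptyset = \big(\theta(v)\theta(\gamma)\big) \cdot \emptyset.$$
Using that the left action of $A^*$ on $\Gamma(A)$ is a monoid action and that a column acting on the empty column returns itself (the identity $\theta(\gamma) \cdot \emptyset = \theta(\gamma)$ recorded in \S\ref{Schensted-column-insertion}), the left-hand side collapses to $\theta(u) \cdot \theta(\gamma)$ and the right-hand side to $\theta(v) \cdot \theta(\gamma)$, so that $\theta(u) \cdot \theta(\gamma) = \theta(v) \cdot \theta(\gamma)$.

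Finally I would apply $\theta$ once more, invoking Lemma~\ref{left-right}: since $\gamma \cdot u = \theta(\theta(u)\cdot\theta(\gamma))$ and $\gamma \cdot v = \theta(\theta(v)\cdot\theta(\gamma))$, and $\theta$ is an involution (hence injective), the equality of the two left actions forces $\gamma \cdot u = \gamma \cdot v$, which is the desired conclusion.

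I do not expect a serious obstacle here; the argument is essentially bookkeeping organised around Lemma~\ref{left-right}. The one point that requires care is tracking the reversal of factors caused by $\theta$ being an \emph{anti}-automorphism, together with the correct reading of the defining property of $\equiv_{styl}$ as a statement about the left action on \emph{all} columns — this is precisely what licenses the specialisation to $\emptyset$ that drives the proof.
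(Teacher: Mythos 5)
Your proof is correct and follows essentially the same route as the paper: the paper likewise reduces to the dual statement about the left action via $\theta$ and Lemma~\ref{left-right}, then concludes by evaluating stylic-congruent words at the empty column and using $\gamma\cdot\emptyset=\gamma$. Your write-up merely makes the $\theta$-bookkeeping explicit where the paper compresses it into ``it is enough to prove the dual statement.''
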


\begin{proof}
By Lemma \ref{left-right}, it is enough to prove the dual statement: if $u\gamma \equiv_{styl} v\gamma $, then
$u\cdot\gamma=v\cdot \gamma $. The hypothesis implies that $(u\gamma )\cdot\emptyset=(v\gamma )\cdot\emptyset$.
This implies $u\cdot(\gamma\cdot\emptyset)=v\cdot(\gamma \cdot \emptyset)$, thus $u\cdot\gamma=v\cdot \gamma$.
\end{proof}

\begin{proof}[Proof of Theorem \ref{quiver}] According to a theorem in  \cite{AOR}, in the formulation of \cite[Theorem 3.3.4]{DHST}, it is enough to show that the ideal $\ker(\varphi)$ is {\it admissible}. This means that $F^m \subset \ker(\varphi) \subset F^2$, where $F$ is the ideal in $\K Q(A)$ generated by the arrows of $Q(A)$.

The first inclusion is clear, since the quiver has no closed path, so that for $m$ large enough, $F^m=0$.

We know that $\ker(\varphi)$ is spanned by the elements, differences of two paths, described in Proposition \ref{span}, whose
notations we use now. In particular, $\gamma u\equiv_{styl} \gamma v$. Thus it is enough to show that $u,v$ are both of length at least 2. We may assume that the element is nonzero.

Observation 1: the alphabet of $\gamma u$ and $\gamma v$ must be equal, since these words are stylically congruent.

Observation 2: assuming that the alphabet is $1,2,3,\ldots$, call {\it weight} of a column the sum of its elements. Then by definition of frank action, the weight of $\gamma \cdot a$ is larger that the weight of $\gamma$, and so the weight of the vertices strictly increases along a path in $Q(A)$.

Suppose by contradiction that $u$ is of length $0$ or $1$,
and we begin by  length 0.
If $v$ also is of length 0, the element is 0, which was excluded. If $v$ is of positive length then, since the action is frank, the alphabet of $\gamma v$ is strictly larger than that of $\gamma$;
hence
the alphabets of $\gamma u=\gamma$ and of $\gamma v$ differ, so that by Observation 1, we cannot have $\gamma u\equiv_{styl} \gamma v$.

Thus we may assume that $u=b$ is of length 1. Then $v$ cannot be of length 0, by the same argument just given. If $v$ is of length
1, then by Observation 1, and since the two actions are frank (so that $u,v\notin \gamma$), we must have $u=v$, and the element
is 0, which was excluded. Thus $v$ is of length at least 2: $v=cv'$, $v'$ nonempty; then by Observation 1, $c$ appears in
$\gamma b$, but not in $\gamma$, since the action $\gamma \cdot c$ is frank, hence $c=b$; but then by Observation 2, the weight of
$\gamma \cdot v$ is
larger than that of $\gamma \cdot b$, and we cannot have the equality $\gamma \cdot u=\gamma \cdot v$, contradicting $\gamma u\equiv_{styl} \gamma v$ by Lemma \ref{gammau}.
\end{proof}

\subsection{Cartan invariants and Indecomposable Projective Modules}

The \emph{Cartan invariants} of a finite dimensional $\K$-algebra $\Lambda$ are
the numbers $\dim_\K(e_i \Lambda e_j)$, where $\{e_1, \ldots, e_n\}$ is a complete
system of primitive orthogonal idempotents of $\Lambda$. They do not depend on the
choice of the complete system.

In the case of the stylic monoid, we are therefore interested in computing the dimension
of the subspaces $e_{\gamma} \K \Styl(A) e_{\gamma'}$ for $\gamma, \gamma' \in \Gamma(A)$.

\begin{proposition}
    \label{cartan-invariants}
    For $\gamma, \gamma' \in \Gamma(A)$,
    \begin{equation*}
        \dim \Big( e_{\gamma} \K \Styl(A) e_{\gamma'} \Big) =
        \Big|
        \big\{
            x \in \Styl(A) : \eta(x) = \gamma
            \text{~and~}
            \theta(\eta(\theta(x)) = \gamma'
        \big\}
        \Big|.
    \end{equation*}
\end{proposition}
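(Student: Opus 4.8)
The plan is to realise the dimension as the trace of an idempotent linear operator and then to collapse the resulting signed count by inclusion--exclusion, using the $\J$-triviality of $\Styl(A)$ throughout. Write $\Lambda = \K\Styl(A)$ and consider the $\K$-linear operator $P\colon \Lambda \to \Lambda$, $P(m) = e_\gamma\, m\, e_{\gamma'}$. Since $e_\gamma^2=e_\gamma$ and $e_{\gamma'}^2=e_{\gamma'}$, the operator $P$ is idempotent with image exactly $e_\gamma\Lambda e_{\gamma'}$; as the trace of an idempotent operator over any field equals the dimension of its image (it is diagonalisable with eigenvalues in $\{0,1\}$), we have $\dim(e_\gamma\Lambda e_{\gamma'}) = \operatorname{tr}(P)$. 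I would compute this trace in the monoid basis $\{x : x\in\Styl(A)\}$. Expanding \eqref{egamma} gives $e_\gamma = \sum_{S\subseteq A\setminus\gamma}(-1)^{|S|}\,\vec{S}\,\gamma$ and likewise for $e_{\gamma'}$, where $\vec{S}$ is the increasing word on $S$ and $\gamma,\gamma'$ are the decreasing words of the columns. Each product $\vec S\gamma\, x\,\vec T\gamma'$ is a single monoid element, so the coefficient of $x$ in $e_\gamma x e_{\gamma'}$ is a signed count of pairs $(S,T)$, and therefore
\[
\operatorname{tr}(P) = \sum_{x\in\Styl(A)}\ \sum_{\substack{S\subseteq A\setminus\gamma,\ T\subseteq A\setminus\gamma'\\ \vec S\gamma\, x\,\vec T\gamma' = x}} (-1)^{|S|+|T|}.
\]

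The first key step is to decouple the two sides. For $u,v,x\in\Styl(A)$, the relation $uxv=x$ forces $x\le_\J ux\le_\J x$ and $x\le_\J xv\le_\J x$, so $\J$-triviality upgrades these to $ux=x$ and $xv=x$; conversely these clearly imply $uxv=x$. Applying this with $u=\vec S\gamma$ and $v=\vec T\gamma'$ splits the inner condition into the independent conditions $\vec S\gamma x=x$ and $x\vec T\gamma'=x$, and since $S$ and $T$ are disjoint variables the inner sum factors as $\alpha_\gamma(x)\,\beta_{\gamma'}(x)$ with $\alpha_\gamma(x)=\sum_{S\subseteq A\setminus\gamma,\ \vec S\gamma x=x}(-1)^{|S|}$ and $\beta_{\gamma'}(x)=\sum_{T\subseteq A\setminus\gamma',\ x\vec T\gamma'=x}(-1)^{|T|}$.

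The second key step is a combinatorial lemma: a word $w$ satisfies $wx=x$ if and only if every letter of $w$ lies in $\eta(x)$, the first column of $N(x)$, which by Proposition~\ref{properties} is precisely the set of letters fixing $x$ on the left. The converse is immediate, and the forward direction follows by peeling off the \emph{last} letter $c$ of $w$ (writing $x=(w')c\,x$ gives $x\le_\J cx\le_\J x$, hence $cx=x$) and inducting. Dually, $xw=x$ if and only if every letter of $w$ lies in the right-fixer set $\{b:xb=x\}$, which equals $\theta(\eta(\theta(x)))$ after applying $\theta$. Consequently $\vec S\gamma x=x$ is equivalent to $S\cup\gamma\subseteq\eta(x)$, so $\alpha_\gamma(x)=0$ unless $\gamma\subseteq\eta(x)$, in which case $\alpha_\gamma(x)=\sum_{S\subseteq\eta(x)\setminus\gamma}(-1)^{|S|}$, which vanishes unless $\eta(x)\setminus\gamma=\emptyset$. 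Hence $\alpha_\gamma(x)$ is $1$ when $\eta(x)=\gamma$ and $0$ otherwise, and symmetrically $\beta_{\gamma'}(x)$ is $1$ when $\theta(\eta(\theta(x)))=\gamma'$ and $0$ otherwise.

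Substituting back, $\operatorname{tr}(P)=\sum_{x}\alpha_\gamma(x)\beta_{\gamma'}(x)$ counts exactly the elements $x$ with $\eta(x)=\gamma$ and $\theta(\eta(\theta(x)))=\gamma'$, which is the claimed cardinality. The main obstacle is the decoupling/collapsing mechanism of the middle two paragraphs: everything rests on $\J$-triviality, both to split $uxv=x$ into independent left and right conditions and to show that a word fixes $x$ only through its individual letters. Once these are established the inclusion--exclusion is routine, and the only ancillary point to confirm is that the trace-equals-dimension identity holds over $\K$, which it does for any field.
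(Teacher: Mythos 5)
Your proof is correct, and it takes a genuinely different route from the paper's. The paper argues directly from the basis of Proposition~\ref{basis}: each basis element $e_{\eta(x)}w_x$ is the image under $\varphi$ of a path from $\eta(x)$ to $\theta(\eta(\theta(x)))$, hence satisfies $e_{\eta(x)}w_x = e_{\eta(x)}w_x e_{\theta(\eta(\theta(x)))}$ and already lies in a single corner $e_\gamma \K\Styl(A) e_{\gamma'}$; sorting the basis by corners gives the count and, as a bonus, an explicit basis of each corner, all over $\Z$ and hence over any field. Your trace plus inclusion--exclusion computation bypasses Proposition~\ref{basis} and the quiver machinery entirely; its two pillars --- decoupling $uxv=x$ into $ux=x$ and $xv=x$ via $\J$-triviality, and identifying the left (resp.\ right) fixer set of $x$ with $\eta(x)$ (resp.\ $\theta(\eta(\theta(x)))$) via Proposition~\ref{properties} and the anti-automorphism $\theta$ --- are sound, and they are in substance the content of the alternative proof the paper sketches right after the proposition via the $\mathrm{lfix}$/$\mathrm{rfix}$ formula of \cite[Theorem~3.20]{DHST}; you have effectively made that remark self-contained. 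The one caveat is your assertion that the trace of an idempotent operator equals the dimension of its image ``over any field'': in characteristic $p$ the trace is only the rank modulo $p$, so as written the argument requires $\mathrm{char}(\K)=0$ (which the paper does assume from \S\ref{quiver-of-stylic-algebra} onward), or else an added observation that $e_\gamma\Z\Styl(A)e_{\gamma'}$ is the image of an idempotent $\Z$-linear projector on the free $\Z$-module $\Z\Styl(A)$, hence a free direct summand, so that the dimension is independent of $\K$ and may be computed over $\mathbb{Q}$. By contrast, the paper's basis argument is characteristic-free and yields the bases of the indecomposable projective modules in the following proposition essentially for free.
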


\begin{proof}
    By Proposition~\ref{basis}, we have that
    $\{e_{\eta(x)}w_x : x\in\Styl(A)\}$
    is a basis of $\Z \Styl(A)$.
    Moreover, each $e_{\eta(x)}w_x$ is the image under $\varphi$ of a path in
    $Q'(A)$ that starts at $\eta(x)$, is labelled $w_x$, and ends at
    $\theta(\eta(\theta(x)))$; see \S\ref{surjectivity-quiver-map}.
    Thus,
    \begin{equation*}
        e_{\eta(x)} w_x = e_{\eta(x)} w_x e_{\theta(\eta(\theta(x)))}.
    \end{equation*}
    It follows that
    $\{e_{\eta(x)} w_x e_{\theta(\eta(\theta(x)))} : x\in\Styl(A) \}$
    is a basis of $\Z \Styl(A)$,
    and that
    \begin{equation*}
        \{e_{\eta(x)} w_x e_{\theta(\eta(\theta(x)))} : x\in\Styl(A)
        \text{~with $\eta(x) = \gamma$ and $\theta(\eta(\theta(x))) = \gamma'$} \}
    \end{equation*}
    is a basis of $e_{\gamma} \Z \Styl(A) e_{\gamma'}$.
\end{proof}

We remark that an alternative proof of Proposition~\ref{cartan-invariants} can
be obtained by appealing to \cite[Theorem~3.20]{DHST}, which gives
a formula for the Cartan invariants for any $\J$-trivial monoid $M$.
Applied to $\Styl(A)$, the formula says that the Cartan invariants are given by
\begin{equation*}
    \{x \in \Styl(A) : \mathrm{lfix}(x) = \gamma \text{~and~} \mathrm{rfix}(x) = \gamma'\},
\end{equation*}
where
\begin{itemize}
    \item
        $\mathrm{lfix}(x) = \min_{\leq_\J}\big\{e \in \Styl(A) : e^2 = e \text{~and~} e x = x\big\}$

    \item
        $\mathrm{rfix}(x) = \min_{\leq_\J}\big\{e \in \Styl(A) : e^2 = e \text{~and~} x e = x\big\}$
\end{itemize}
Proposition~\ref{cartan-invariants} then follows by observing that $\mathrm{lfix}(x)
= \eta(x)$ and $\mathrm{rfix}(x) = \theta(\eta(\theta(x)))$ for all $x \in
\Styl(A)$.

Finally, using a similar argument to the proof of
Proposition~\ref{cartan-invariants},
one obtains bases for the
right and left indecomposable projective $\Styl(A)$-modules.

\begin{proposition}
    For $\gamma \in \Gamma(A)$,
    \begin{enumerate}
        \item
            $\{ e_{\eta(x)} w_x : x \in \Styl(A) \text{~with~} \eta(x) = \gamma\}$ is basis
            of $e_{\gamma} \K \Styl(A)$, and

        \item
            $\{ e_{\eta(x)} w_x : x \in \Styl(A) \text{~with~} \theta(\eta(\theta(x))) = \gamma\}$ is basis
            of $\K \Styl(A) e_{\gamma}$.
    \end{enumerate}
\end{proposition}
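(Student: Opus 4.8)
The plan is to mimic the structure of the proof of Proposition~\ref{cartan-invariants}, extracting from it two one-sided statements rather than the two-sided one. The key observation, already established there, is that $\{e_{\eta(x)}w_x : x \in \Styl(A)\}$ is a basis of $\Z\Styl(A)$ (Proposition~\ref{basis}), and that each basis element factors as
\begin{equation*}
    e_{\eta(x)} w_x = e_{\eta(x)} w_x e_{\theta(\eta(\theta(x)))},
\end{equation*}
since $e_{\eta(x)}w_x$ is the image under $\varphi$ of a path in $Q(A)$ starting at $\eta(x)$ and ending at $\theta(\eta(\theta(x)))$.

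For part (1), I would right-multiply each basis element by $e_\gamma$. Because the $e_\delta$ are orthogonal idempotents (Theorem~\ref{system}), we have $e_{\eta(x)}w_x e_\gamma = e_{\eta(x)} w_x e_{\theta(\eta(\theta(x)))} e_\gamma$, which equals $e_{\eta(x)}w_x$ when $\theta(\eta(\theta(x))) = \gamma$ and vanishes otherwise. Hence $\{e_{\eta(x)}w_x e_\gamma : x \in \Styl(A)\}$ spans $\Z\Styl(A)\, e_\gamma$, and the nonzero members are exactly those indexed by $x$ with $\theta(\eta(\theta(x))) = \gamma$; these form a basis because they are a subset of a basis of $\Z\Styl(A)$. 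This is precisely claim (2). Symmetrically, left-multiplying by $e_\gamma$ and using the factorization $e_{\eta(x)}w_x = e_{\eta(x)}\, e_{\eta(x)} w_x$ (from idempotence of $e_{\eta(x)}$) together with orthogonality gives $e_\gamma e_{\eta(x)} w_x = e_{\eta(x)}w_x$ exactly when $\eta(x) = \gamma$, proving claim (1). Passing from $\Z$ to $\K$ is harmless since these basis elements remain linearly independent after tensoring with any field.

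The only point requiring care — and the step I expect to be the main obstacle — is confirming that the ending vertex of the $N$-path associated with $x$ is indeed $\theta(\eta(\theta(x)))$, so that the idempotent $e_{\theta(\eta(\theta(x)))}$ appears on the right. This identification is asserted in \S\ref{surjectivity-quiver-map} and used already in Proposition~\ref{cartan-invariants}, so I would simply invoke it; the underlying reason is that reading the $N$-path backwards, via the anti-automorphism $\theta$ and the conjugation of left and right actions in Lemma~\ref{left-right}, exchanges the roles of the first column $\eta$ of $N(x)$ and the terminal column of the right action. Once this endpoint identification is in hand, both statements follow immediately from orthogonality of the $e_\gamma$, and no genuinely new computation is needed beyond what Proposition~\ref{cartan-invariants} already supplies.
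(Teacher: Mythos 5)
Your proposal is correct and follows essentially the same route as the paper, which itself derives these bases ``by a similar argument to the proof of Proposition~\ref{cartan-invariants}'': the basis $\{e_{\eta(x)}w_x\}$ from Proposition~\ref{basis}, the factorization $e_{\eta(x)}w_x = e_{\eta(x)}w_x\,e_{\theta(\eta(\theta(x)))}$ coming from the endpoint of the $N$-path, and orthogonality of the idempotents from Theorem~\ref{system}. The only blemish is the opening of your second paragraph, where you announce ``for part (1)'' but then correctly observe that right-multiplication by $e_\gamma$ yields claim (2); the mathematics is sound and the labels are sorted out by the end.
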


\bibliographystyle{amsalpha}
\bibliography{references}

\end{document}